\documentclass[10pt,a4paper,oneside]{amsart}
\newcounter{commentcounter}

\usepackage{amsmath} % Lots of maths functionality
\usepackage{amssymb} % Maths symbols
\usepackage{amsthm} % Maths environments: \begin{proof}, etc.
\usepackage{thmtools}
\usepackage{stmaryrd} % [[ brackets
\usepackage[english]{babel} % Language and hyphenation.
\usepackage[font=small,justification=centering]{caption} % More flexibility for captioning figures
\usepackage[nodayofweek]{datetime}
\usepackage[shortlabels]{enumitem} % Change enumeration labelling with \begin{enumerate}[a)] etc.
\usepackage[T1]{fontenc} % For font encoding, to allow accents, copy & paste, inequality signs, etc. to all work nicely.
\usepackage[utf8]{inputenc} % To be loaded after fontenc, also for encoding.
\usepackage{ifthen} % For Dani's \begin{com} environment and \numberedtheorem
\usepackage{mathabx} % Contains \pnest symbol and more. Clashes with accents for \ring
\usepackage{mathtools} % Uses amsmath, fixes quirks and adds functionality.
\usepackage[dvipsnames]{xcolor} % Allow links to have colour. Needs to be before hyperref and tikz-cd
\usepackage[pdftex,  colorlinks=true, pagebackref=true]{hyperref} % Makes references and citations into links
    \hypersetup{urlcolor=RoyalBlue, linkcolor=RoyalBlue,  citecolor=black}
\usepackage{setspace} % Allows \onehalfspacing etc. for changing gaps between lines
\usepackage{tikz-cd} % Commutative diagrams
\usepackage{xfrac} % Nicer quotients 
\usepackage[capitalize]{cleveref} % use \Cref{} for instead of X~\ref{} 

\renewcommand*{\backref}[1]{}
\renewcommand*{\backrefalt}[4]
{
    \ifcase #1
        No citation in the text.
    \or
        Cited on Page #2.
    \else
        Cited on Pages #2.
    \fi
}

\makeatletter
\@namedef{subjclassname@1991}{Mathematical subject classification 1991}
\@namedef{subjclassname@2000}{Mathematical subject classification 2000}
\@namedef{subjclassname@2010}{Mathematical subject classification 2010}
\@namedef{subjclassname@2020}{Mathematical subject classification 2020}
\makeatother

\newtheorem{thm}{Theorem}[section]
\newtheorem{lemma}[thm]{Lemma}

\newtheorem{prop}[thm]{Proposition}

\newtheorem{claim}[thm]{Claim}

\newtheorem*{thm*}{Theorem}
\newtheorem*{lemma*}{Lemma}

\newtheorem{thmx}{Theorem}
\newtheorem{corx}[thmx]{Corollary}

\newtheorem{qux}[thmx]{Question}

\theoremstyle{definition}
\newtheorem{defn}[thm]{Definition}
\newtheorem{notn}[thm]{Notation}
\newtheorem{remark}[thm]{Remark}

\newtheorem{example}[thm]{Example}

\newtheorem*{remark*}{Remark}
\newtheorem*{ex*}{Example}
\newtheorem*{acks}{Acknowledgements}
\newtheorem*{outl}{Outline}

\theoremstyle{plain}

    \newtheoremstyle{TheoremNum}
        {8.0pt plus 2.0pt minus 4.0pt}{8.0pt plus 2.0pt minus 4.0pt} %%% space between body and thm
        {\itshape} %%% Thm body font
        {-0.15cm} %%% Indent amount (empty = no indent)
        {\bfseries} %%% Thm head font
        {.} %%% Punctuation after thm head
        { }  %%% Space after thm head
        {\thmname{#1}\thmnote{ \bfseries #3}}%%% Thm head spec
    \theoremstyle{TheoremNum}

\newcommand*{\claimproofname}{My proof}

\DeclareMathOperator{\Aut}{\mathrm{Aut}}

\newcommand*{\frakg}{\mathfrak{g}}

\newcommand*{\fraku}{\mathfrak{u}}
\newcommand*{\frakb}{\mathfrak{b}}
\newcommand*{\frakr}{\mathfrak{r}}
\newcommand*{\frakz}{\mathfrak{z}}
\newcommand*{\frakd}{\mathfrak{d}}

\newcommand{\OO}{\mathrm{O}}

\newcommand{\zar}[1]{\overline{#1}^{\mathrm{Zar}}}

\DeclareMathOperator{\Ad}{\mathrm{Ad}}
\DeclareMathOperator{\ad}{\mathrm{ad}}

\def\Z{\mathbb{Z}}

\newcommand{\NN}{\mathbb{N}}
\newcommand{\ZZ}{\mathbb{Z}}

\newcommand{\RR}{\mathbb{R}}

\usepackage{tikz}
\usetikzlibrary{arrows,quotes}
\tikzstyle{blackNode}=[fill=black, draw=black, shape=circle]

\title[Quasihomomorphisms to real algebraic groups]{Quasihomomorphisms to \\ real algebraic groups}

\usepackage{comment}
\usepackage{todonotes}

\author{Sami Douba}
\address[S.~Douba]{Mathematisches Institut, Rheinische Friedrich-Wilhelms-Universit\"at Bonn, Endenicher Allee 60, 53115 Bonn, Germany}
\email{douba@math.uni-bonn.de}

\author{Francesco Fournier-Facio}
\address[F. Fournier-Facio]{Department of Pure Mathematics and Mathematical Statistics, University of Cambridge, UK}
\email{ff373@cam.ac.uk}

\author{Sam Hughes}
\address[S.~Hughes]{Mathematisches Institut, Rheinische Friedrich-Wilhelms-Universit\"at Bonn, Endenicher Allee 60, 53115 Bonn, Germany}
\email{sam.hughes.maths@gmail.com; hughes@math.uni-bonn.de}

\author{Simon Machado}
\address[S.~Machado]{Department of Mathematics, ETH Z{\"u}rich, Switzerland}
\email{simon.machado@math.ethz.ch}

\date{\today}
\subjclass[2020]{20C99, 20F65, 20G20, 20J05, 22E15}
\begin{document}

\begin{abstract}
A quasihomomorphism is a map that satisfies the homomorphism relation up to bounded error. Fujiwara and Kapovich proved a rigidity result for quasihomomorphisms taking values in discrete groups, showing that all quasihomomorphisms can be built from homomorphisms and sections of bounded central extensions. We study quasihomomorphisms with values in real linear algebraic groups, and prove an analogous rigidity theorem.
\end{abstract}

\maketitle

\section{Introduction}

Let $G$ be a locally compact group. We say that a subset $D \subset G$ is \emph{bounded} if $D$ is contained in a compact subset of $G$. Let $\Gamma$ be a group.  A map $f \colon \Gamma \to G$ is a \emph{quasihomomorphism} if there exists a bounded subset $D_f \subset G$ such that 
\[f(xy) \in f(x)f(y) D_f \text{ for all } x, y \in \Gamma.\] 
Two quasihomomorphisms $f, g: \Gamma \rightarrow G$ are \emph{close} if there exists a bounded subset $D_{f, g} \subset G$ such that $f(x) \in g(x) D_{f, g}$ for all $x \in \Gamma$. Note that the notion of quasihomomorphism depends on the topology of $G$, but not on the topology of $\Gamma$, so we always view $\Gamma$ as a discrete group.

Our main result is a rigidity theorem for quasihomomorphisms with values in a real linear\footnote{In the sequel, the word ``linear'' will be implicit.} algebraic group, in the spirit of \cite{fujiwarakapovich}. We define a \emph{bounded modification} of a quasihomomorphism $f \colon \Gamma \to G$ to be a quasihomomorphism\footnote{Outside the case of abelian target, a map that is close to a quasihomomorphism need not be a quasihomomorphism (\cref{ex bounded distance}).} $f' \colon \Gamma' \to G'$, where $\Gamma' < \Gamma$ is a finite-index subgroup, $G' < G$ is a closed subgroup, and $f'$ is close to $f|_{\Gamma'}$. If $G$ has the structure of a real algebraic group, then we require moreover that $G'$ be an algebraic subgroup of $G$. A subset $X$ of a group $G$ is \emph{normal} if $g^{-1}Xg = X$ for all $g \in G$.

\begin{thmx}[Main theorem, succinct form]
\label{main theorem}

Let $f \colon \Gamma \to G$ be a quasihomomorphism, where $G$ is a real algebraic group. Then there exists a bounded modification $f' \colon \Gamma' \to G'$ such that $D_{f'}$ is contained in a bounded normal subset of $G'$.
\end{thmx}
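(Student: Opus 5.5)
We argue by induction on $\dim G$, together with the number of connected components. The aim is to shrink $G$ to an algebraic subgroup $G'$ in which the defect set becomes ``central up to compact''. First we make two reductions. The Zariski closure $H$ of $f(\Gamma)$ is an algebraic subgroup, and $f$ is trivially a quasihomomorphism into $H$ with $D_f \subset H$, since $f(x)^{-1}f(y)^{-1}f(xy)\in H$. Next, passing to the finite-index subgroup $\Gamma'=f^{-1}$-preimage behaviour is awkward for a non-homomorphism. We therefore use the bounded error to show that $x\mapsto f(x)H^\circ$ is a genuine homomorphism to the finite group $H/H^\circ$ after modifying $f$ by a bounded amount. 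This works because $D_f$ meets only finitely many components, so the induced map to a finite group is a quasihomomorphism to a discrete group, and Fujiwara--Kapovich, or directly finiteness, gives a finite-index $\Gamma'$ mapping into a bounded set of components. So we may assume that $G$ is Zariski connected and that $f(\Gamma)$ is Zariski dense.

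The key object is the \emph{normal closure} of the defect. Let $N$ be the closure of the set of conjugates $\bigcup_{g}gD_fg^{-1}$. If this set is bounded, we are done with $G'=G$. Otherwise we use the Levi/structure decomposition $G=(L\ltimes U)$, where $U$ is the unipotent radical and $L$ is reductive, with $L=Z(L)^\circ\cdot[L,L]$. The plan is to project $f$ to the quotients $G\to G/U$ and $G/U\to L/\text{(compact part)}$, and to analyse each piece.

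For a semisimple noncompact factor $S$, we show that a quasihomomorphism with Zariski dense image is close to a homomorphism. The approach is to use the $\Ad$-action: $\Ad\circ f$ is a quasihomomorphism to $\mathrm{GL}(\frakg)$. Zariski density then forces the conjugates of $D_f$ under $f(\Gamma)$, which are uniformly controlled because $f(x)D_ff(x)^{-1}\subset D_f^{-1}\cdots$ up to bounded products via the quasi-cocycle identity, to remain bounded under a Zariski dense set, hence under all of $G$ by algebraicity. Concretely, the identity $f(x)f(y)f(y^{-1}x^{-1}) \in$ bounded set shows that $f(x)D_ff(x)^{-1}\subset D_f^{3}$-type sets. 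The set of $g$ conjugating a fixed compact into a fixed compact is closed, so conjugation-invariance of a bounded neighbourhood under $f(\Gamma)$ should propagate to its Zariski closure, modulo compact factors. I expect this propagation from ``bounded conjugates under $f(\Gamma)$'' to ``bounded conjugates under $G$'' to be the main obstacle. It is false for unipotent directions, where conjugation by a dense subgroup can stay bounded on a subset without being bounded under the whole group.

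For the unipotent radical and the torus parts, we treat the problem inductively. If the $G$-conjugates of $D_f$ are unbounded, this unboundedness is detected in some $L$-weight space of $\fraku$ or in a split torus direction. There we use that quasihomomorphisms to $\RR^n$ are close to maps with bounded defect in a canonical way, through homogenization, so that the unipotent component of $f$ can be adjusted by a bounded amount to lie in a proper algebraic subgroup, namely the stabilizer/centralizer of the relevant weight data. We then pass to this proper algebraic subgroup $G_1\subsetneq G$ and apply induction on $(\dim G,\#\pi_0 G)$. Each step changes $f$ by a bounded amount and restricts to finite index, and compositions of bounded modifications are bounded modifications, so the induction terminates with the required $f'\colon\Gamma'\to G'$.
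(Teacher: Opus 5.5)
\textbf{There is a genuine gap.} The inductive step is essentially the whole theorem: if the $G$-conjugates of $D_f$ are unbounded, you must find a bounded modification landing in a proper algebraic subgroup. None of the mechanisms you sketch delivers it.

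\emph{The simple factors.} You want to pass from ``the $f(\Gamma)$-conjugates of $D_f$ are bounded'' to ``the $G$-conjugates are bounded'' via Zariski density. But the condition $gCg^{-1}\subset C'$ is closed, not Zariski-closed, and the failure is not confined to unipotent directions. In $\mathrm{SL}_2(\RR)$, conjugating the unipotent element with upper-right entry $1$ by $\mathrm{diag}(t,t^{-1})$ gives upper-right entry $t^{2}$. This is bounded for $|t|\le 1$, yet that set of $t$ is Zariski dense in the diagonal torus, on which the conjugates are unbounded. So the failure already occurs inside a simple factor, exactly where you intended to use the propagation.

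The missing idea is dynamical (\cref{simple case}):
\begin{itemize}
\item an unbounded sequence $f(\gamma_n)$ has a $(z^+,z^-)$-contracting subsequence;
\item since $f(\gamma_n)^{-1}hf(\gamma_n)$ stays bounded for each $h\in D_f$ (\cref{BOL D}), every such $h$ fixes $z^+$ (\cref{parabolic}), so $D_f$ lies in a proper parabolic subgroup;
\item $\zar{\Delta_f}$ is normalised by the Zariski-dense group $\langle f(\Gamma)\rangle$ (\cref{defect group normal}), so in an adjoint simple group this forces $D_f=\{1\}$.
\end{itemize}
Thus $f$ is an actual homomorphism, not merely close to one.

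\emph{The unipotent and torus directions.} Here there is no argument. $\RR^n$-valued quasihomomorphisms already have bounded defect, and homogenisation does not remove it. More importantly, the $U$-coordinate of $f$ is a quasi-cocycle twisted by the reductive part, and it need not be close to any cocycle; \cref{counterexample central defect} exploits exactly this. Nor can conclusions about quotients be glued back, since a bounded change of one coordinate typically destroys the quasihomomorphism property. For example, in $\mathrm{SO}(2)\ltimes\RR^2$, take a homomorphism $(u,c)$ with $u\neq 1$ and $c$ unbounded. Replacing $u$ by $1$ gives the map $(1,c)$, whose defect $c(g)^{u(h)}-c(g)$ is unbounded (cf.\ \cref{ex bounded distance}).

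\emph{What the paper does instead.} It does not induct on dimension; it uses \cref{simple case} only to split $R=G_b\times G_h\times T$, and then works with the defect group $\Delta=\overline{\langle D_f\rangle}$.
\begin{itemize}
\item Via \cref{exp local homeo}, it shows that $f(\Gamma)$ acts on $\Delta$ by a relatively compact family in the Lie group $\Aut(\Delta)$. Boundedness of $x^{f(\Gamma)}$ for $x\in\langle D_f\rangle$ does not pass to the closure for free.
\item Zimmer's theorem on locally closed orbits then gives $f(\Gamma)\subset Z_G(\zar{\Delta})K$ with $K$ compact (\cref{BOL}).
\item Hence $\Gamma\to G/\zar{\Delta}Z_G(\zar{\Delta})$ is a homomorphism with precompact Zariski-dense image, so this quotient is compact and $G=B\,\zar{\Delta}Z_G(\zar{\Delta})$ with $B$ compact.
\item Deleting the bounded $\zar{\Delta}$-component of $f$ gives $f'$ into $BZ_G(\zar{\Delta})$. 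Its defect lies in the $B$-conjugates of a bounded subset of $\zar{\Delta}$, which is a bounded normal set.
\end{itemize}

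\emph{The connectedness reduction.} This is also off. The composite to the finite group $H/H^\circ$ is trivially a quasihomomorphism, so Fujiwara--Kapovich gives nothing there. Shifting each $f(x)$ into $H^\circ$ by a bounded element need not produce a quasihomomorphism. The paper instead finds a finite-index subgroup of the form $(X_iX_i^{-1})^m$ with $X_i=f^{-1}(G_i)$. It then defines $f'$ by products $f(y_1)f(z_1)^{-1}\cdots f(y_m)f(z_m)^{-1}\in G_0$, which are controlled by \cref{long product}.
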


There is a rich theory of quasihomomorphisms when the target is an abelian group, thanks to their many appearances throughout mathematics: stable commutator length \cite{calegari}, bounded cohomology \cite{frigerio}, one-dimensional dynamics \cite{ghys}, geometric group theory \cite{bestvina:fujiwara, manning}, knot theory \cite{malyutin} and symplectic geometry \cite{polterovichrosen}. Rigidity results for quasihomomorphisms can be traced back to work of Burger--Monod \cite{burgermonod1, burgermonod2} on bounded cohomology of higher-rank lattices. This motivated Ozawa \cite{Ozawa2011} to introduce property (TTT) as a tool to prove rigidity of quasihomomorphisms with amenable target. Dumas \cite{dumas2024quasihomomorphism} has since proved that lattices in higher-rank simple algebraic groups over local fields satisfy this property. Recently, the work of Hrushovski \cite{hrushovski} showed that quasihomomorphisms play a central role in model theory, especially in relation to approximate subgroups.

The result in this direction that is most relevant to the present work is due to Fujiwara--Kapovich \cite{fujiwarakapovich}, who studied the case in which the target is discrete (so that bounded sets are finite). They proved that if $f \colon \Gamma \to G$ is a quasihomomorphism with discrete target, then there exists a bounded modification $f' \colon \Gamma' \to G'$ such that $D_{f'}$ is \emph{central} in $G'$. Therefore, up to passing to a finite-index subgroup and changing by a bounded amount, one can construct $f$ from homomorphisms and sections of bounded central extensions. Accordingly, they call such quasihomomorphisms \emph{constructible}. 

In the non-discrete setting, one cannot expect such a strong rigidity statement, even for $G=\mathrm{SO}(2) \ltimes \RR^2$: see \cref{counterexample central defect}. As remarked by Hrushovski \cite[Section 5.17]{hrushovski}, the right generalisation of ``central'' is ``normal'' once one abandons the discrete world.

\begin{remark*}
    There appears to be some confusion in the literature about constructibility of quasihomomorphisms: \cite{rank1} claims the existence of non-constructible quasihomomorphisms in the case where $\Gamma$ is the fundamental group of a closed hyperbolic manifold and $G$ is a non-abelian unipotent real algebraic group. Below, we exhibit a counterexample to \cite[Theorem 4.28]{rank1}\footnote{Theorem 4.24 in the arXiv version}; see \cref{ex BV24} and \cref{rem BV24} for a discussion.
\end{remark*}

Quasihomomorphisms with normal defect play a special role in the literature; indeed, they are the ones that appear in relation to model theory \cite{hrushovski}. It turns out that such quasihomomorphisms are very restricted, in the general case of a locally compact target \cite[Proposition C.3]{hrushovski}. In our case we can push this even further, giving the following stronger version of \cref{main theorem}. Following Hrushovski \cite{hrushovski}, we say that a  simply connected  abelian algebraic normal subgroup $A \lhd G$ is \emph{rigid} if the homomorphism $G \to \Aut(A)$ induced by conjugacy has compact image.\footnote{Hrushovski also allows for $\ZZ^n$ factors, but these do not occur in the algebraic setting.} In this case, $A \cong \RR^n$ for some $n \in \NN$, and there is a Euclidean structure on $A$ such that the action of $G$ factors through $\OO(n)$.

\begin{thmx}[Main theorem, stronger form]
\label{main theorem strong}
    Let $f \colon \Gamma \to G$ be a quasihomomorphism, where $G$ is a real algebraic group. Then there exists a bounded modification $f' \colon \Gamma' \to G'$ with the following property:

    There exists a compact (algebraic) normal subgroup $C \lhd G'$ and a rigid abelian algebraic normal subgroup $A \lhd G'/C$ such that $D_{f'}$ is contained in the preimage of $A$ in $G'$.
\end{thmx}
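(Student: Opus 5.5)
The plan is to deduce \cref{main theorem strong} from \cref{main theorem} together with a structure theorem for bounded normal subsets of real algebraic groups, in the spirit of \cite[Proposition C.3]{hrushovski}. First I apply \cref{main theorem} to obtain a bounded modification $f_1 \colon \Gamma_1 \to G_1$ with $D_{f_1}$ contained in a bounded normal subset $N \subset G_1$. Replacing $N$ by its closure, I may assume $N$ is compact and conjugation-invariant. Let $H$ be the Zariski closure of the subgroup generated by $N$. Then $H$ is a normal algebraic subgroup of $G_1$, and the aim is to show that $H$ is ``compact-by-rigid-abelian'' inside $G_1$.

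The key step is to analyse a real algebraic group $G_1$ together with a normal algebraic subgroup $H$ that is Zariski-generated by a compact normal subset $N$. I will use a Levi decomposition $H^\circ = L \ltimes R$, with $R$ the radical and $U \le R$ the unipotent radical. First, the image of $N$ in $G_1/R$ generates a normal semisimple subgroup. Since its $G_1$-conjugates stay bounded, no noncompact simple factor can be met nontrivially: conjugating by unbounded elements of a noncompact simple factor expands any non-central element, by the Cartan decomposition. Hence the semisimple part is compact modulo a finite centre. Second, working modulo this compact part, consider the unipotent radical and its lower central series. If the image of $N$ in $U/[U,U]$ or in deeper layers were non-abelian or acted on non-isometrically, then some element of $G_1$ (or of $N$ itself) would act on the relevant vector space with an eigenvalue of modulus $\neq 1$ or with a nontrivial Jordan block. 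Iterating that element would then make the conjugates of $N$ unbounded. This forces the unipotent part generated by $N$ to be an abelian vector group $A\cong\RR^n$ on which $G_1$ acts with relatively compact image, that is, $A$ is rigid. Tori in $R$ are handled in the same way: their split part cannot occur, and their anisotropic part is absorbed into the compact group $C$.

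Finally, I may need to shrink further. I pass to a finite-index subgroup $\Gamma'$ to land in identity components, and I replace $G_1$ by an algebraic subgroup $G'$ normalising the chosen $C$ and $A$, adjusting $f$ by a bounded amount. Such a bounded modification keeps the defect inside the preimage of $A$, since being a bounded modification is transitive. I also need to check that compactness of the image $G'\to\Aut(A)$ holds for all of $G'$ and not merely on $N$; this follows because $N$ is $G'$-invariant and spans $A$ modulo $C$.

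The main obstacle is the unipotent step: showing that a compact normal subset cannot generate a non-abelian unipotent normal subgroup, or one acted on non-compactly. The subtlety is that $G_1$ itself may act trivially on some layers, as in Heisenberg-type examples. There, the defect could a priori live in a nonabelian nilpotent group, and one must exploit the quasihomomorphism property, not just normality, to modify $f$ so that the defect drops into an abelian layer. For this I would first try induction on $\dim U$. Project $f$ to $G_1/Z(U)$, apply the inductive hypothesis there, and lift back. Then use that the central extension by $Z(U)\cong\RR^k$ with bounded defect can be straightened, because real-valued quasimorphisms with bounded commutator behaviour are close to homomorphisms on the relevant subgroup.
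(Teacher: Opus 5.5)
Your route differs from the paper's: you apply \cref{main theorem} as a black box and then classify compact normal subsets of a real algebraic group. That route can be made to work. However, the classification as you sketch it goes wrong exactly at the step you call the main obstacle. That obstacle is illusory, and the tool you propose for it is false.

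Normality and boundedness alone suffice for the unipotent step. Suppose $g\in G_1$ has bounded conjugacy class and $U\lhd G_1$ is unipotent. Then $u\mapsto [g,u]=g^{-1}g^u$ has bounded image. By induction on $\dim U$ (passing to $U/Z(U)$) one gets $[g,U]\subset Z(U)$. The identity $[g,uv]=[g,v][g,u]^v$ then makes $u\mapsto[g,u]$ a homomorphism into the vector group $Z(U)$ with bounded image, hence trivial. So $N$ centralises $U$ and $N\cap U\subset Z(U)$. Note that $N\not\subset U$ in general, so ``the image of $N$ in $U/[U,U]$'' only makes sense after this. For example, in the Heisenberg group every bounded normal subset is central, so no non-abelian nilpotent defect survives \cref{main theorem}.

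The fact you would invoke instead is false. You claim that real-valued quasimorphisms, being bounded on commutators, are close to homomorphisms. But every real quasimorphism is bounded on commutators, while Brooks-type quasimorphisms on free groups are not close to any homomorphism. This is exactly the non-vanishing of $H^2_b(\Gamma;\RR)$ that forces \cref{main theorem strong} to allow abelian defect and makes \cref{h2b} need a hypothesis. A bounded central extension by $Z(U)\cong\RR^k$ cannot in general be straightened.

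Your torus step is also wrong as stated. A \emph{central} split torus acts trivially, so no eigenvalue argument excludes it, and it genuinely carries defect. Take a homogeneous quasimorphism $\phi\colon F_2\to\RR$ that is not a homomorphism, and set $f=\exp\circ\phi\colon F_2\to\RR^\times$. Every bounded modification lands in $G'=\RR^\times$ with $C\subset\{\pm1\}$. If $D_{f'}\subset C$, then $\log|f'|$ would be a homomorphism close to $\phi|_{\Gamma'}$. By homogeneity and injectivity of restriction in $H^2_b$, this would force $\phi$ to be a homomorphism. So $A$ has to be the image of the split torus.

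The statement you actually need is the following, for a Levi decomposition $G_1=L\ltimes U$ of the identity component. An element of bounded conjugacy class has $L$-component centralising $U$ and lying in (compact simple factors)$\cdot Z(L)$. Its $U$-component lies in $Z(U)$ and has bounded $L$-orbit. One checks that these elements form an algebraic subgroup of the form (compact normal)$\cdot$(central split torus)$\cdot$(rigid subspace of $Z(U)$). This also handles the passage to the Zariski closure $H$, which you leave unaddressed.

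The paper avoids all of this by not using \cref{main theorem} as a black box. Its proof produces $f'\colon\Gamma'\to G'=BZ_{G'}(\zar{\Delta})$ with $B$ compact and $D_{f'}\subset\zar{\Delta}$, so $G'$ acts on $\zar{\Delta}$ through $B$. Hence $\zar{\Delta}$ splits as its centre times a compact group. Its unique maximal compact subgroup $C$ is characteristic, hence normal in $G'$, and $\zar{\Delta}/C$ is simply connected abelian with $G'$ acting through $B$. Once repaired, your route would yield a more general statement about arbitrary bounded normal subsets, in the spirit of Hrushovski's Proposition~C.3. The price is the extra structure theory above.
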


It follows that the composition $\Gamma' \xrightarrow{f'} G' \to (G'/C)/A$ is a homomorphism. Therefore, up to passing to a finite-index subgroup and changing by a bounded amount, one can build $f$ from homomorphisms, sections of bounded \emph{abelian} extensions, and sections of compact extensions. Since bounded abelian extensions are governed by bounded cohomology, we obtain the following corollary.

\begin{corx}
\label{h2b}
    Let $\Gamma$ be a group with the property that for every finite-dimensional orthogonal representation $\pi$ of $\Gamma$ with no non-zero invariant vectors, the second bounded cohomology $H^2_b(\Gamma; \pi)$ vanishes. Then for every quasihomomorphism $f \colon \Gamma \to G$, where $G$ is a real algebraic group, there exists a bounded modification $f' \colon \Gamma' \to G'$, and a compact (algebraic) normal subgroup $C \lhd G'$ such that the composition $\Gamma' \xrightarrow{f'} G' \to G'/C$ is a quasihomomorphism with central defect.

    If moreover the second bounded cohomology with trivial coefficients $H^2_b(\Gamma;\RR)$ vanishes, then $f' \colon \Gamma' \to G'$ and $C \lhd G'$ can be chosen such that $\Gamma' \xrightarrow{f'} G' \to G'/C$ is a homomorphism.
\end{corx}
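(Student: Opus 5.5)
Starting point: apply \cref{main theorem strong} to the quasihomomorphism $f$. This gives a bounded modification $f' \colon \Gamma' \to G'$, a compact normal subgroup $C \lhd G'$, and a rigid abelian normal subgroup $A \lhd G'/C$ with $A \cong \RR^n$ and $D_{f'}$ contained in the preimage of $A$. Write $\bar f \colon \Gamma' \to \bar G := G'/C$ for the composition.

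\textbf{Set-up as a bounded extension.} Since $\bar f$ modulo $A$ is a homomorphism $\rho \colon \Gamma' \to \bar G/A$, the map $\bar f$ becomes a set-theoretic lift of $\rho$. Its defect
\[c(x,y) = \bar f(xy)^{-1}\bar f(x)\bar f(y) \in A\]
is a bounded function. Conjugation by $\bar G$ on $A$ factors through a compact group, so we fix a Euclidean structure on $A$ preserved by $\bar G$. Then $\Gamma'$ acts on $A$ via $\pi(x) = \mathrm{Ad}(\bar f(x))|_A$. This is a genuine action, because $A$ is abelian: $\mathrm{Ad}(\bar f(x))$ depends only on $\rho(x)$, as $A$ acts trivially on itself. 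So $\pi$ is a finite-dimensional orthogonal representation. A standard computation, exactly as for lifts in extensions with abelian kernel, shows that $c$ is a bounded $2$-cocycle in $C^2_b(\Gamma';\pi)$, suitably ordered.

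\textbf{Splitting off invariant vectors.} Decompose $A = A^{\pi} \oplus A'$ orthogonally, where $A^\pi$ is the space of $\pi(\Gamma')$-invariant vectors and $A'$ is its complement. Both summands are $\pi$-invariant, so the cocycle splits as $c = c_0 + c'$. By hypothesis $H^2_b(\Gamma';\pi|_{A'}) = 0$. We must apply the hypothesis to $\Gamma'$ rather than $\Gamma$. The cleanest fix is to pass back to $\Gamma$ via induction: $H^2_b(\Gamma';\sigma) \cong H^2_b(\Gamma;\mathrm{Ind}\,\sigma)$, and $\mathrm{Ind}\,\sigma$ is orthogonal with no invariant vectors when $\sigma$ has none, by Frobenius reciprocity. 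Vanishing then gives a bounded $b \colon \Gamma' \to A'$ with $c' = \delta b$. Replace $\bar f$ by $\bar f \cdot b$, or $b\cdot \bar f$ according to conventions. This map is close to $\bar f$, and its defect is $c_0$, which takes values in $A^\pi$.

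\textbf{Making the defect central, and then trivial.} The main obstacle is to show that this new defect is central in a suitable algebraic subgroup, and to lift the correction back to $G'$. Two points need care. First, $A^\pi$ is centralised by $\bar f(\Gamma')$ but perhaps not by all of $\bar G$. To handle this, replace $G'$ by the preimage of the Zariski closure $H$ of the subgroup generated by $\bar f(\Gamma')\cdot A$ (enlarged so the corrected map lands in it). This $H$ is an algebraic subgroup, and elements of $H$ act trivially on $A^\pi$ by Zariski density. Hence $A^\pi$ is central in $H$ and the new defect is central, which gives a bounded modification with central defect modulo $C\cap H$. Second, the correction $b$ must be lifted to $G'$ with values in the preimage of $A$, so that the modified $f'$ stays close to the original. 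This is possible because $C$ is compact: any bounded lift of $b$ to $G'$ keeps the modification bounded, and we must verify that it is still a quasihomomorphism.

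Finally, if also $H^2_b(\Gamma;\RR)=0$, then by the same induction argument the trivial-coefficient part $c_0$ with values in $A^\pi \cong \RR^k$ is a bounded coboundary $\delta b_0$. Correcting once more by $b_0$ produces a homomorphism $\Gamma' \to G'/C$, as claimed.
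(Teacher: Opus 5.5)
Your proposal is correct and follows the paper's proof. Both reduce via \cref{main theorem strong} and \cref{compact extension} to a defect lying in the rigid abelian subgroup $A$, read that defect as a bounded $2$-cocycle for the orthogonal $\Gamma'$-module $A$, and kill its component in $(A^{\pi})^{\perp}$. You split $A$ orthogonally where the paper uses the long exact sequence, which amounts to the same thing; you then kill the trivial component when $H^2_b(\Gamma;\RR)=0$.

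You are more careful than the paper on two points it passes over silently: you transfer the hypothesis from $\Gamma$ to $\Gamma'$ via Shapiro's lemma, and you shrink the target to the centraliser of $A^{\pi}$ so that the new defect is genuinely central. The one step you leave open, that the lifted correction is still a quasihomomorphism, is immediate from \cref{compact extension}.
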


The first statement of \cref{h2b} applies to lattices in higher-rank simple groups, while the second statement applies only in the non-Hermitian case \cite{monodshalom}. Moreover, the second statement applies to amenable groups \cite{johnson}, certain lamplighters \cite{monod:F, elena}, Thompson's group $F$ and other groups of dynamical origin such as compactly supported transformation groups of Euclidean spaces \cite{monod:F, ccc} and certain natural colimit groups arising in the context of homological stability \cite{ccc}. In the terminology of \cite{fujiwarakapovich}, the last conclusion is that the quasihomomorphism $f'$ is an \emph{almost homomorphism}.

\medskip

Under stronger assumptions on the target group, we obtain severe restrictions on quasihomomorphisms, as demonstrated by the following result, which serves as a starting point for the proof of the main theorem.

\begin{thmx}
\label{simple case}
    Let $\mathbb{K}$ be a local field, let $G = {\bf G}(\mathbb{K})$, where ${\bf G}$ is a connected adjoint simple $\mathbb{K}$-group, and let $f \colon \Gamma \to G$ be a quasihomomorphism. If $f(\Gamma)$ is unbounded in $G$ and generates a Zariski-dense subgroup of $G$, then $f$ is a homomorphism.
\end{thmx}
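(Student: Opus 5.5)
The plan is to show that the defect set is normalised by $f(\Gamma)$, so that Zariski density and simplicity force it to be trivial unless $f(\Gamma)$ is bounded. For $x,y\in\Gamma$ write $d(x,y):=f(y)^{-1}f(x)^{-1}f(xy)$. These elements lie in $D:=D_f$, which we may take compact.

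\textbf{Step 1: the cocycle identity.} Expand $f(xyz)$ in the two ways $f(xy)f(z)d(xy,z)$ and $f(x)f(yz)d(x,yz)$, substitute $f(xy)=f(x)f(y)d(x,y)$ and $f(yz)=f(y)f(z)d(y,z)$, and cancel $f(x)f(y)$. This gives
\[
f(z)^{-1}d(x,y)f(z)=d(y,z)\,d(x,yz)\,d(xy,z)^{-1}\qquad\text{for all }x,y,z\in\Gamma .
\]
Two consequences follow.
\begin{enumerate}[(a)]
\item Every $f(z)$ normalises the subgroup $\langle D_0\rangle$ generated by the actual defects $D_0=\{d(x,y)\}$. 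Hence $f(\Gamma)$ normalises its Zariski closure $M$.
\item Every conjugate $f(z)^{-1}d f(z)$ with $d\in D_0$ lies in the fixed compact set $B=DDD^{-1}$, uniformly in $z$.
\end{enumerate}

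\textbf{Step 2: $M$ is trivial or all of $G$.} The normaliser of $M$ is Zariski closed and contains $f(\Gamma)$, hence it contains a Zariski-dense subgroup of $G$. So $M$ is normal in $G$. Since $\mathbf G$ is connected, adjoint and simple, $M^\circ$ is either trivial or all of $G^\circ$. If $M^\circ$ is trivial, then $M$ is a finite normal subgroup of a connected group, hence central, hence trivial because $\mathbf G$ is adjoint. In that case $D_0=\{1\}$ and $f$ is a homomorphism. It therefore remains to rule out the case that $M$ contains $G^\circ$, using the hypothesis that $f(\Gamma)$ is unbounded.

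\textbf{Step 3: bounded conjugation forces boundedness (main obstacle).} Pass to the adjoint representation $\Ad\colon G\to \mathrm{GL}(\frakg)$ and let $\mathcal A\subseteq\mathrm{End}(\frakg)$ be the $\mathbb K$-linear span of $\Ad(\langle D_0\rangle)$. This is an associative algebra. Linear equations are Zariski closed, so $\mathcal A$ also contains $\Ad(M)\supseteq\Ad(G^\circ)$. By Zariski density, $\mathcal A$ is $\Ad(G)$-invariant and $\frakg$ is an irreducible $\mathcal A$-module.

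Choose finitely many words $w_1,\dots,w_m$ in $D_0$ such that the $\Ad(w_i)$ form a basis of $\mathcal A$. By (b) applied letter by letter, each $\Ad(f(z))^{-1}\Ad(w_i)\Ad(f(z))$ stays in a fixed bounded set as $z$ ranges over $\Gamma$. Therefore the conjugation action of $\Ad(f(z))$ on $\mathcal A$ is a uniformly bounded family of automorphisms.

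By irreducibility, $\mathcal A$ is a simple algebra with centraliser a division algebra. By Burnside/Wedderburn, $\mathcal A\cong\mathrm{End}_E(\frakg)$ for the commutant field $E$. An element of $\mathcal A^\times$ acting boundedly by conjugation is then bounded modulo the centre $E^\times$.

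The conclusion is that $\Ad(f(\Gamma))$ is bounded modulo scalars in $E^\times$. Since $\det\Ad(g)$ is a root of unity, it has absolute value $1$, so these scalars are bounded as well. Hence $\Ad(f(\Gamma))$ is bounded. Because $\Ad$ is a closed embedding for adjoint $\mathbf G$ (a proper map on $\mathbb K$-points), $f(\Gamma)$ is bounded in $G$. This contradicts the hypothesis, so the case $M\supseteq G^\circ$ cannot occur, which completes the argument.

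I expect this last step to be the delicate one. The difficulties are handling non-absolutely-irreducible $\Ad$ and verifying properness of $\Ad$ on $\mathbb K$-points. If it fails, I would try a Cartan decomposition argument instead: $f(z)=k_1ak_2$ with $a$ unbounded in a split torus. Then $a^{-1}(k_1^{-1}dk_1)a$ bounded forces $k_1^{-1}dk_1$ into a proper parabolic-type subgroup. Passing to a limit of the $k_1$'s would place $D_0$, and hence $M$, inside a conjugate of a proper algebraic subgroup, which contradicts $M\supseteq G^\circ$.
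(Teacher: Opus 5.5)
Your route differs from the paper's and works in characteristic $0$, but Step~3 has a genuine gap in positive characteristic. The problem is the sentence ``By Zariski density \dots\ $\frakg$ is an irreducible $\mathcal A$-module''. Zariski density only gives $\mathcal A=\mathrm{span}_{\mathbb K}\Ad(G)$, so you are asserting that the adjoint representation of $G$ is irreducible over $\mathbb K$. In characteristic $0$ this holds: an $\Ad(G)$-invariant subspace is $\Ad(\mathbf G)$-invariant, hence an ideal of the $\mathbb K$-simple Lie algebra $\frakg$. However, the theorem covers all local fields, and in small positive characteristic the claim is false. For $\mathbf G=\mathrm{PGL}_p$ over $\mathbb F_p((t))$ one has $\mathrm{tr}(I)=p=0$, so the trace descends to a nonzero $\Ad$-invariant functional on $\frakg=\mathfrak{gl}_p/\mathbb K I$. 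Its kernel is a proper invariant subspace, and for $p=2$ it has no invariant complement, so $\mathcal A$ is not even semisimple. Wedderburn and Skolem--Noether then have nothing to act on.

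The step can be repaired by working with the composition factors $V_i$ of $\frakg$.
\begin{enumerate}[(i)]
\item Conjugation by $\Ad f(z)\in\mathcal A^\times$ preserves $\ker(\mathcal A\to\mathrm{End}(V_i))$, so it is bounded on each simple image $\mathcal A_i$.
\item Skolem--Noether then gives boundedness modulo the centre of $\mathcal A_i$. That centre is the field $Z(E_i)$, not the commutant $E_i$ itself, which is in general only a division algebra.
\item The scalars are bounded because $\det$ on $V_i$ is a character of the semisimple group $\mathbf G$, hence trivial.
\item The kernel of $\mathbf G\to\prod_i\mathrm{GL}(V_i)$ has a normal group of points consisting of unipotent elements, which is trivial since $\mathbf G$ is reductive and adjoint. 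So this map is finite and therefore proper on $\mathbb K$-points.
\end{enumerate}
With this patch your argument goes through. Without it, it is valid in characteristic $0$, which includes the real case the paper needs later.

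Both arguments use the same input. First, $f(\Gamma)$ conjugates the defects into a fixed bounded set (\cref{FK conjugacy}, \cref{BOL D}). Second, $\zar{\Delta_f}$ is normal by Zariski density (\cref{defect group normal}). The paper then kills the defect dynamically:
\begin{itemize}
\item unboundedness yields a $(z^+,z^-)$-contracting sequence in $f(\Gamma)$ (\cref{contractingsubsequence});
\item after transferring to $\mathrm{SL}_d$ via \cref{reducetosln}, a $KA^+K$ computation (\cref{parabolic}) shows that every element with bounded conjugates along this sequence fixes $z^+$;
\item hence $D_f$ lies in a proper parabolic, and $\zar{\Delta_f}$ is a proper nontrivial normal subgroup.
\end{itemize}
You instead use simplicity first to reduce to the case where $\langle D_0\rangle$ is Zariski-dense. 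You then upgrade bounded conjugation on $\langle D_0\rangle$ to bounded conjugation on the whole linear envelope $\mathcal A$ of $\Ad(G)$, which pins down $\Ad f(\Gamma)$ modulo the centre. Your approach replaces the flag-variety dynamics by finite-dimensional algebra, but it depends on the module structure of the chosen representation, hence the characteristic issue. The paper's argument is uniform in $\mathbb K$, since the needed representation is supplied by \cref{reducetosln}. It also gives the sharper conclusion that $D_f$ fixes a point of a flag variety. Your fallback Cartan-decomposition sketch is essentially the paper's proof.
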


\cref{simple case} holds also for non-Archimedean local fields, in particular it holds for $p$-adic groups. However, in the final step of our proof of the main theorem, we need the fact that a real algebraic group with a compact Zariski-dense subgroup is itself compact (see the footnote in \cref{endgame}), hence our restriction to the setting of real algebraic groups.

\medskip

A consequence of the proof of the main theorem is that the difference between \emph{normal} and {\em central} defect is only due to the presence of compact quotients, as in \cref{counterexample central defect}. Therefore, excluding compact quotients, we obtain the following direct analogue of \cite{fujiwarakapovich}.

\begin{thmx}
\label{no compact factors}
    Let $f \colon \Gamma \to G$ be a quasihomomorphism, where $G$ is a real algebraic group. Suppose that the Zariski-closure in $G$ of the group generated by $f(\Gamma)$ has no non-trivial compact algebraic quotients. Then there exists a bounded modification $f' \colon \Gamma \to G'$ such that $D_{f'}$ is central in $G'$.
\end{thmx}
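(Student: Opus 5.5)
We would deduce \cref{no compact factors} from \cref{main theorem strong}, by showing that when compact quotients are excluded, the compact subgroup $C$ and the rigid abelian subgroup $A$ produced there can only contribute central defect.

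\textbf{Step 1: normalize.} Let $H$ be the Zariski-closure of the group generated by $f(\Gamma)$. Passing from $f$ to a bounded modification is harmless, so we first reduce to $G = H$. The hypothesis on $H$ should survive passage to the finite-index subgroups and algebraic subgroups produced in the proof of \cref{main theorem strong}. This is one of the delicate points. We would arrange it by always replacing $G'$ with the Zariski-closure $H'$ of $\langle f'(\Gamma')\rangle$. Since $\Gamma'$ has finite index, $H'$ contains the identity component of $H$ up to a bounded correction, and a compact quotient of $H'$ would induce one of $H^\circ$. This last implication needs care, via finite-index arguments. We then apply \cref{main theorem strong} to obtain $f'\colon\Gamma'\to G'$, with $C\lhd G'$ compact and $A\lhd G'/C$ rigid, such that $D_{f'}\subset \pi^{-1}(A)$.

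\textbf{Step 2: kill $C$ and make $A$ central.} Since $G'/C$ is a quotient of $G'$, the image of $G'$ in $\Aut(A)$ is a compact algebraic quotient of $G'$. The same holds for $\Aut(C)$, modulo inner automorphisms, together with the compact group $C$ itself. The no-compact-quotients hypothesis forces the image in $\Aut(A)$ to be trivial, so $A$ is central in $G'/C$. For $C$, we would use that $G'$ is connected in the relevant sense, which rules out nontrivial finite quotients. Then $G'$ acts on the compact group $C$ through a compact group of automorphisms; this image is a compact quotient of $G'$, hence trivial modulo inner automorphisms. We would use the Levi/structure theory of real algebraic groups to write $G' = C\cdot Z_{G'}(C)$ up to finite index. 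Restricting to $Z_{G'}(C)$ and adjusting $f'$ by a bounded amount by projecting away the $C$-component, we reach a group in which $C$ is central. Finally, the central compact $C$ must be a torus, but it is then a compact quotient of $G'$, hence trivial? Not quite, since a central torus need not split off. Instead we absorb $C$ into the defect: the preimage of $A$ is then a central extension of an abelian group by a central compact group, and we want it central in $G'$.

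\textbf{Step 3: centrality of the preimage of $A$.} Let $B = \pi^{-1}(A)$. Conjugation gives a map $G'\times B\to C$, $(g,b)\mapsto gbg^{-1}b^{-1}$. For fixed $g$ this is a homomorphism $B\to C$, since $A$ is central mod $C$ and $C$ is central. This yields a homomorphism $G'\to \mathrm{Hom}(B,C)$ whose image is abelian. It should be bounded, since it is algebraic into a compact target, and it is therefore a compact quotient of $G'$, so it is trivial. Hence $B$ is central and $D_{f'}\subset B$ is central.

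\textbf{Main obstacle.} We expect the main obstacle to be Steps 1–2: ensuring that the hypothesis "no non-trivial compact algebraic quotients" passes to the modified group $G'$. It is stated for the original Zariski-closure, while $G'$ arises after passing to finite-index subgroups and algebraic subgroups. Carrying it through likely requires tracking the construction in the proof of \cref{main theorem strong}, rather than using that result as a black box. It would probably be better to re-enter the proof at the point where $C$ is produced as the kernel toward a compact factor, and to observe that under our hypothesis this factor is trivial.
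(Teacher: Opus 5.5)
\textbf{There is a genuine gap}, and it is the one you flag as the main obstacle. The statement of \cref{main theorem strong} gives no control over the group $G'$ it outputs, so the hypothesis ``no non-trivial compact algebraic quotients'' cannot be transported to $G'$, and Steps~2--3 have nothing to work with. Your Step~1 justification is also false: a bounded modification can shrink the Zariski-closure drastically.

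Here is a concrete failure. Take $G=\mathrm{SL}_3(\RR)$, which has no compact algebraic quotients, and any bounded $f$ with $\langle f(\Gamma)\rangle$ Zariski-dense. Then any non-homomorphism $f'\colon\Gamma\to\mathrm{SO}(3)$ is a bounded modification of $f$. With $G'=C=\mathrm{SO}(3)$ and $A=1$, it satisfies the conclusion of \cref{main theorem strong}. Yet $G'$ is compact and $D_{f'}$ is not central, since $\mathrm{SO}(3)$ has trivial centre. So \cref{main theorem strong} cannot be used as a black box here.

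There is also a second requirement you do not address. The theorem asks for a modification defined on all of $\Gamma$, whereas the black box passes to a finite-index $\Gamma'$.

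\textbf{How the paper resolves both points.} It re-enters the proof, but not at the place where $C$ is produced; the compact object that matters is $G/N$ in \cref{endgame}.
\begin{itemize}
\item The hypothesis forces $\zar{\langle f(\Gamma)\rangle}$ to be Zariski-connected, since its component group is a finite, hence compact, algebraic quotient. So \cref{connected} is not needed and $\Gamma$ is never replaced by a finite-index subgroup.
\item Set $N=\zar{\Delta}\,Z_G(\zar{\Delta})$. The map $\Gamma\to G/N$ is a homomorphism with bounded Zariski-dense image, so $G/N$ is a compact algebraic quotient of $G$, hence trivial. Therefore the compact complement $B$ in $G=BN$ is trivial.
\item The modification of \cref{final modification} then takes values in $G'=Z_G(\zar{\Delta})$ with $D_{f'}\subset\zar{\Delta}$. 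This set is centralised by $G'$, so the defect is central, and $C$ and $A$ never enter.
\end{itemize}

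\textbf{A smaller point on Step~3.} Your justification that the commutator map is trivial is wrong. $\mathrm{Hom}(\pi^{-1}(A),C)$ is not compact, since it contains $\mathrm{Hom}(\RR^n,C)$. The correct reason is that the map factors through an algebraic homomorphism from the unipotent group $A$ to the torus $C$, and such homomorphisms are trivial.
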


Note that here we did not need to pass to a finite-index subgroup of $\Gamma$. More generally, in the proof of the main theorem, we need only pass to a finite-index subgroup to ensure that the Zariski-closure of the group generated by the image of the quasihomomorphism is Zariski-connected (\cref{connected}).

\medskip

We believe that the analogue of \cref{main theorem} should hold in the more general setting of real Lie groups. Together with the Gleason--Yamabe Theorem \cite{tao} (using \cref{compact extension}), this would then give a rigidity result for quasihomomorphisms with connected locally compact target. In the opposite direction, we do not know whether to expect a positive answer to the following:

\begin{qux}
\label{q:tdlc}
    Is there a quasihomomorphism $f \colon \Gamma \to G$, where $G$ is a totally disconnected, locally compact group, for which \cref{main theorem} does not hold? What if one assumes $\Gamma = \ZZ$?
\end{qux}

\begin{outl}
    We start with generalities on quasihomomorphisms in \cref{s:qhm}. Then in \cref{s:simple} we prove \cref{simple case}, which serves as a starting point for the proof of the main theorem. In \cref{s:lag}, we prove some technical results on real algebraic groups, most notably \cref{exp local homeo}. In \cref{s:proof} we prove the main theorem in all its various versions: \cref{main theorem}, \cref{main theorem strong}, \cref{h2b} and \cref{no compact factors}. Finally, in \cref{s:cex}, we prove \cref{counterexample central defect}, which shows that in our most general setting one cannot hope for a bounded modification with central defect.
\end{outl}

\begin{acks}
    SD and SH are supported by Deutsche Forschungsgemeinschaft (DFG, German Research Foundation) under Germany's Excellence Strategy - EXC-2047/1 - 390685813. FFF is supported by the Herchel Smith Postdoctoral Fellowship Fund.  SH is supported by a Humboldt Research Fellowship at Universit\"at Bonn. Various stages of  this research were conducted at the Forschungsinstitut f\"ur Mathematik (ETH Z\"urich); at the Isaac Newton Institute for Mathematical Sciences during the programme Operators, Graphs, Groups (EPSRC grant EP/Z000580/1s); and at the Simons Laufer Mathematical Sciences Institute in Berkeley, California, during the Spring 2026 semester (National Science Foundation Grant No. DMS-2424139); the authors acknowledge these institutions' hospitality and support.
\end{acks}

\section{Generalities on quasihomomorphisms}
\label{s:qhm}

\begin{notn}
\label{notation approx}

We work with right actions. Accordingly, conjugacy is denoted by $g^h = h^{-1} g h$, and commutators by $[g, h] = g^{-1} h^{-1} g h = g^{-1} g^h$.

Let $K$ be a bounded set. If $x\in yK$, then we write $x\approx_K y$. Below we will introduce shorthand notations to specify what the compact set is: see \cref{long product} and the proof of \cref{final modification}.

When $H$ is a subgroup of the algebraic group $G$, we denote by $\zar{H}$ the Zariski-closure of $H$ in $G$.
\end{notn}

\begin{defn}
    Let $f \colon \Gamma \to G$ be a map to a locally compact group $G$. The \emph{defect set} of $f$ is
    \[D_f \coloneqq \{ f(y)^{-1}f(x)^{-1}f(xy) \mid x, y \in \Gamma \}.\]
    We say that $f$ is a \emph{quasihomomorphism} if $D_f$ is bounded. The \emph{defect group} of $f$ is the topological closure of the group generated by $D_f$ and is denoted by $\Delta_f$.
\end{defn}

Using the notation above, $f$ is a quasihomomorphism if $f(xy) \approx_K f(x)f(y)$ for some bounded set $K$. Similarly, $f$ and $g$ are close if $f(x) \approx_K g(x)$.

\begin{lemma}~\cite[Section 2]{fujiwarakapovich}
\label{FK conjugacy}
    Let $f \colon \Gamma \to G$ be a function. Then
    \[D_f^{f(x)} \subset D_f^2 D_f^{-1}\]
    for all $x \in \Gamma$.
\end{lemma}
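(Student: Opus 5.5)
The plan is to derive the inclusion directly from associativity in $\Gamma$, by computing a single value of $f$ at a triple product in two different ways. Write $d(x,y) \coloneqq f(y)^{-1}f(x)^{-1}f(xy)$, so that $D_f = \{d(x,y) \mid x,y \in \Gamma\}$ and $f(xy) = f(x)f(y)\,d(x,y)$ exactly. Fix $x \in \Gamma$ and an arbitrary element $d(y,z) \in D_f$. Since conjugation is $g^h = h^{-1}gh$, we want to express $f(x)^{-1}d(y,z)f(x)$ as a product of two defect elements followed by the inverse of one. This suggests putting $x$ as the \emph{last} factor of the triple product, so that $f(x)$ appears to the right of $d(y,z)$.

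The key step is to expand $f(yzx)$ in two ways. Bracketing as $(yz)x$ gives
\[
f(yzx) = f(yz)f(x)\,d(yz,x) = f(y)f(z)\,d(y,z)\,f(x)\,d(yz,x).
\]
Bracketing as $y(zx)$ gives
\[
f(yzx) = f(y)f(zx)\,d(y,zx) = f(y)f(z)f(x)\,d(z,x)\,d(y,zx).
\]
Cancelling $f(y)f(z)$ on the left of both expressions yields
\[
d(y,z)\,f(x)\,d(yz,x) = f(x)\,d(z,x)\,d(y,zx).
\]

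Multiplying this on the left by $f(x)^{-1}$ and on the right by $d(yz,x)^{-1}$ gives
\[
d(y,z)^{f(x)} = d(z,x)\,d(y,zx)\,d(yz,x)^{-1} \in D_f^2 D_f^{-1}.
\]
Since $d(y,z)$ was an arbitrary element of $D_f$, this proves $D_f^{f(x)} \subset D_f^2 D_f^{-1}$.

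No boundedness hypothesis is used, which is consistent with the lemma being stated for an arbitrary function $f$. The only real obstacle is bookkeeping: with right-action conventions the order of the triple matters. Putting $x$ first would instead produce conjugation by $f(x)^{-1}$, so one must check that the chosen bracketing matches $g^h = h^{-1}gh$.
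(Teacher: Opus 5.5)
Your proof is correct: comparing the two bracketings of $f(yzx)$ gives exactly $d(y,z)^{f(x)} = d(z,x)\,d(y,zx)\,d(yz,x)^{-1}$, which is the standard associativity argument behind the cited result of Fujiwara--Kapovich (the paper itself gives no proof, only the citation). One small correction to your closing aside, which does not affect the argument: in this computation the conjugator is always $f$ of the \emph{last} variable and nothing else survives, so putting $x$ first would not produce conjugation by $f(x)^{-1}$, but conjugation by $f$ of whichever element comes last.
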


\begin{lemma}
\label{BOL D}
    Let $x \in \langle D_f \rangle$. Then $x^{f(\Gamma)}$ is bounded.
\end{lemma}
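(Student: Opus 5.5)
The plan is to write $x \in \langle D_f \rangle$ as a word $x = d_1^{\epsilon_1} \cdots d_n^{\epsilon_n}$ with $d_i \in D_f$ and $\epsilon_i \in \{\pm 1\}$. Since conjugation by $f(y)$ is a group automorphism, for every $y \in \Gamma$ we have
\[x^{f(y)} = \bigl(d_1^{f(y)}\bigr)^{\epsilon_1} \cdots \bigl(d_n^{f(y)}\bigr)^{\epsilon_n}.\]
By \cref{FK conjugacy}, each $d_i^{f(y)}$ lies in $D_f^2 D_f^{-1}$, which is independent of $y$. Hence each factor $(d_i^{f(y)})^{\epsilon_i}$ lies in $E \coloneqq D_f^2D_f^{-1} \cup D_f D_f^{-2}$.

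It follows that $x^{f(\Gamma)} \subset E^n$, where $n$ is the fixed word length of $x$. It remains to check that $E^n$ is bounded. Since $D_f$ is contained in a compact set $K$, the set $E^n$ is contained in the image of a finite product of copies of $K \cup K^{-1}$ under the continuous multiplication and inversion maps. That image is compact, so $E^n$ is bounded.

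The only point needing care is that the bound may depend on $x$ through $n$, which is all the statement asks for: the claim is about the conjugacy orbit of a single element. The uniformity in $y$ comes entirely from \cref{FK conjugacy}, so there is no real obstacle.
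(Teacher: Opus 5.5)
Your proof is correct and is essentially the paper's argument: write $x$ as a word of length $n$ in $D_f^{\pm 1}$, apply \cref{FK conjugacy} to each letter (taking inverses for the $D_f^{-1}$ letters), and conclude $x^{f(\Gamma)} \subset (D_f^2D_f^{-1} \cup D_fD_f^{-2})^n$, which is bounded. You also spell out why this finite product is bounded, which the paper leaves implicit.
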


\begin{proof}
    By \cref{FK conjugacy}, we have
    \[x^{f(\Gamma)} \subset (D_f^{f(\Gamma)} \cup (D_f^{-1})^{f(\Gamma)})^k \subset (D_f^2 D_f^{-1} \cup D_f D_f^{-2})^k. \qedhere\]
\end{proof}

\begin{prop}
\label{defect group normal}

Suppose that $G$ is an algebraic group, and that $f \colon \Gamma \to G$ is a quasihomomorphism. Suppose that $f(\Gamma)$ generates a Zariski-dense subgroup of $G$. Then $\zar{\Delta_f} = \zar{\langle D_f\rangle}$ is a normal subgroup of $G$.
\end{prop}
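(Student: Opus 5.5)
First I would check the equality $\zar{\Delta_f} = \zar{\langle D_f\rangle}$. We have $\langle D_f\rangle \subset \Delta_f \subset \zar{\langle D_f\rangle}$, because the Zariski-closure of a subgroup is a subgroup and is closed in the Hausdorff topology. Taking Zariski-closures of this chain gives the equality. Set $H \coloneqq \zar{\langle D_f\rangle}$. It remains to show that $H$ is normalised by $G$. The normaliser $N_G(H)$ is Zariski-closed, since $N_G(H)=\{g : gHg^{-1}\subset H\}$ is an intersection of closed conditions. By hypothesis $f(\Gamma)$ generates a Zariski-dense subgroup, so it suffices to show that $f(x)$ normalises $H$ for every $x\in\Gamma$.

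Fix $x \in \Gamma$ and write $g = f(x)$. By \cref{FK conjugacy}, $D_f^{g} \subset D_f^2D_f^{-1} \subset \langle D_f\rangle$. Conjugation by $g$ is a group automorphism, so $\langle D_f\rangle^{g} = \langle D_f^{g}\rangle \subset \langle D_f \rangle$. Conjugation is also a Zariski-homeomorphism of $G$, hence $H^{g} = \zar{\langle D_f\rangle^{g}} \subset H$. This gives $g^{-1}Hg \subset H$, which is only one inclusion. For the other, I would use that $g^{-1}Hg$ is an algebraic subgroup isomorphic to $H$ as a variety. It therefore has the same dimension and the same number of irreducible components as $H$. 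A closed subvariety of $H$ with the same dimension and the same number of components must equal $H$: each component of $g^{-1}Hg$ is an irreducible closed set of full dimension, so it is a component of $H$. Hence $g^{-1}Hg = H$. (Alternatively one can avoid the dimension count by applying \cref{FK conjugacy} to $f(x)^{-1}$ directly, but $f(x)^{-1}$ need not lie in $f(\Gamma)$, so the dimension argument is cleaner.)

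Therefore every $f(x)$ lies in $N_G(H)$. Since $N_G(H)$ is a Zariski-closed subgroup containing $f(\Gamma)$, it contains the Zariski-closure of $\langle f(\Gamma)\rangle$, which is $G$. So $H \lhd G$.

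The main obstacle is the step from $g^{-1}Hg\subset H$ to equality. Here one needs the Noetherian/dimension argument for real algebraic groups, phrased for the complex or real points as appropriate. If the real points cause trouble, I would instead argue on the descending chain $H \supset H^{g}\supset H^{g^2}\supset\cdots$ of Zariski-closed sets. By Noetherianity this chain stabilises, $H^{g^n}=H^{g^{n+1}}$, and applying conjugation by $g^{-n}$ gives $H=H^{g}$.
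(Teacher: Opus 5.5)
Your proof is correct and has the same structure as the paper's. In both, \cref{FK conjugacy} shows that each $f(x)$ maps $\langle D_f\rangle$ into itself, the normaliser of $\zar{\langle D_f\rangle}$ is Zariski-closed, and Zariski-density of $\langle f(\Gamma)\rangle$ finishes. Your Noetherian chain argument, which upgrades $H^{g}\subset H$ to $H^{g}=H$, is a valid way to supply the reverse inclusion; the paper passes over that point by simply asserting that $f(\Gamma)$ normalises $\langle D_f\rangle$.

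Your reason for dismissing the direct route is mistaken, though. The proof of \cref{long product} gives $f(x)^{-1}\in f(x^{-1})D_f^2$. Writing $f(x)^{-1}=f(x^{-1})e$ with $e\in D_f^2$, every $d\in D_f$ satisfies $d^{f(x)^{-1}}=e^{-1}d^{f(x^{-1})}e\in\langle D_f\rangle$. So $f(x)$ genuinely normalises $\langle D_f\rangle$ (and hence $\Delta_f$) with no dimension or Noetherian argument needed.
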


\begin{proof}
\cref{FK conjugacy} implies that every element of $f(\Gamma)$ normalises $\langle D_f \rangle$, hence also $\Delta_f$. It follows that the group generated by $f(\Gamma)$ normalises $\Delta_f$, and hence also $\zar{\Delta_f}$. This is an algebraic condition, therefore by Zariski-density $G$ normalises $\zar{\Delta_f}$, too.
\end{proof}

\begin{lemma}
\label{long product}
Let $y_1, \ldots, y_m, z_1, \ldots, z_m \in \Gamma$ and let $f \colon \Gamma \to G$ be a quasihomomorphism. Then
\[f(y_1 z_1^{-1} \cdots y_m z_m^{-1}) \approx_{K_m} f(y_1) f(z_1)^{-1} \cdots f(y_m) f(z_m)^{-1},\]
where $K_m$ is the set $(D_f \cup D_f^{-1})$ to a power depending only on $m$.
\end{lemma}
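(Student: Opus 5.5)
We argue by induction on $m$, pushing the error terms to the right using \cref{FK conjugacy}. Two elementary facts are needed first. For inverses, $f(z^{-1})$ is close to $f(z)^{-1}$. Indeed, $f(1) = f(1\cdot 1) \in f(1)f(1)D_f$, so $f(1) \in D_f^{-1}$. Moreover $f(1) = f(z z^{-1}) \in f(z) f(z^{-1}) D_f$. Hence $f(z^{-1}) \in f(z)^{-1} f(1) D_f^{-1} \subset f(z)^{-1} D_f^{-1} D_f^{-1}$. For a single product,
\[f(y z^{-1}) \in f(y) f(z^{-1}) D_f \subset f(y) f(z)^{-1} D_f^{-2} D_f.\]
This settles the case $m=1$, with $K_1 = D_f^{-2}D_f$.

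For the inductive step, write $w = y_1 z_1^{-1}\cdots y_{m-1}z_{m-1}^{-1}$ and $P = f(y_1)f(z_1)^{-1}\cdots f(y_{m-1})f(z_{m-1})^{-1}$. By induction, $f(w) \in P K_{m-1}$. Then
\[f(w\, y_m z_m^{-1}) \in f(w) f(y_m z_m^{-1}) D_f \subset P K_{m-1} f(y_m) f(z_m)^{-1} K_1 D_f.\]
It remains to move $K_{m-1}$ past $f(y_m)f(z_m)^{-1}$. Write $g = f(y_m)f(z_m)^{-1}$. Then $K_{m-1} g = g\, K_{m-1}^{g}$, so we need $K_{m-1}^{g}$ to be contained in a power of $D_f \cup D_f^{-1}$ with exponent depending only on $m$.

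This is the main step, and \cref{FK conjugacy} handles it. That lemma gives $D_f^{f(x)} \subset D_f^2 D_f^{-1}$, and inverting gives $(D_f^{-1})^{f(x)} \subset D_f D_f^{-2}$. So conjugation by any $f(x)$ maps $E := D_f \cup D_f^{-1}$ into $E^3$, and therefore maps $E^k$ into $E^{3k}$. Conjugation by $f(z)^{-1}$ is the inverse of conjugation by $f(z)$, so we cannot apply the lemma to it directly. Instead we use the inverse fact above: $f(z)^{-1} \in f(z^{-1}) E^2$. Write $f(z)^{-1} = f(z^{-1}) e$ with $e \in E^2$. Then for $k' \in E^k$ we get $k'^{f(z)^{-1}} = (k'^{f(z^{-1})})^{e} \in e^{-1} E^{3k} e \subset E^{3k+4}$. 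Combining, $K_{m-1}^{g} \subset E^{N}$ with $N$ depending only on the exponent of $K_{m-1}$, hence only on $m$.

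Therefore $f(y_1z_1^{-1}\cdots y_mz_m^{-1}) \in P\, g\, E^{N} K_1 D_f$. We set $K_m := E^{N + 4}$, which is a power of $D_f \cup D_f^{-1}$ (enlarging the exponent if needed). This completes the induction. The only delicate point is tracking the conjugation by $f(z)^{-1}$; the remaining bookkeeping is routine.
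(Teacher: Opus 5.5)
Your proof is correct and follows the paper's argument. It uses the same base case, via $f(1)\in D_f^{-1}$ and $f(z^{-1})\in f(z)^{-1}D_f^{-2}$, and the same induction that pushes error sets to the right with the conjugation lemma; you peel off the last factor rather than the first, and you make explicit the conjugation by $f(z)^{-1}$ (via $f(z)^{-1}\in f(z^{-1})E^2$), which the paper leaves implicit in ``conjugate by appropriate elements in the image of $f$''.
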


\begin{proof}
For $m = 1$, first note that
\[f(1) = f(1 \cdot 1) \in f(1) f(1) D_f \quad\Rightarrow\quad f(1) \in D_f^{-1}.\]
Next,
\[f(1) = f(x x^{-1}) \in f(x) f(x^{-1}) D_f \quad\Rightarrow\quad f(x)^{-1} \in f(x^{-1}) D_f^2.\]
It follows that
\[f(y z^{-1}) \in f(y) f(z^{-1}) D_f \subset f(y) f(z)^{-1} D_f^{-2} D_f.\]
So $K_1 = (D_f \cup D_f^{-1})^3$ works.

Suppose that the statement is true up to $m$, and let $K_m$ be the corresponding bounded set. Then
\small
\begin{align*}
f(y_1 z_1^{-1} \cdots y_{m+1} z_{m+1}^{-1}) &\in f(y_1 z_1^{-1}) f(y_2 z_2^{-1} \cdots y_{m+1} z_{m+1}^{-1}) D_f \\
& \subset f(y_1) f(z_1)^{-1} K_1 f(y_2) f(z_2)^{-1} \cdots f(y_{m+1}) f(z_{m+1})^{-1} K_m D_f.
\end{align*}
\normalsize
Using \cref{FK conjugacy}, we can conjugate $K_1$ by appropriate elements in the image of $f$ while keeping the error set a power of $(D_f \cup D_f^{-1})$.
\end{proof}

\begin{remark}
\label{compact extension}

Let $\pi \colon G \to Q$ be quotient with compact kernel $C$. If $f \colon \Gamma \to G$ is a quasimorphism, then $D_{\pi f} = \pi(D_f)$, hence $\pi f$ is also a quasihomomorphism. Conversely, if $g \colon \Gamma \to Q$ is a quasihomomorphism, then and $\hat{g} \colon \Gamma \to G$ is a set-theoretic lift, then $D_{\hat{g}} \subset \pi^{-1}D_g$, hence $\hat{g}$ is also a quasihomomorphism.

It is also easy to see that the conclusion of our main theorem (in its various forms: \cref{main theorem}, \cref{main theorem strong} and \cref{no compact factors}) is preserved in passing from $f \colon \Gamma \to G$ to $\pi f \colon \Gamma \to Q$, and conversely in passing from $g \colon \Gamma \to Q$ to $\hat{g} \colon \Gamma \to G$.
\end{remark}

\section{The simple case}
\label{s:simple}

Throughout this section, we denote by $\mathbb{K}$ a local field, by ${\bf G}$ a connected adjoint simple algebraic $\mathbb{K}$-group such that $G := {\bf G}(\mathbb{K})$ is non-compact. 

\begin{defn}
    Given a pair $P^\pm$ of opposite proper parabolic subgroups of $G$ and $z^\pm \in G/P^\pm$, a sequence $g_n$ in $G$ is {\em $(z^+, z^-)$-contracting} if $g_n$ converges to the constant function $z^+$ uniformly on compact subsets of $(G/P^+) \setminus Z_{z^-}$, where $Z_{z^-} \subset G/P^+$ denotes the set of all points in $G/P^+$ that are opposite to $z^-$. 
\end{defn}

The following two lemmas are well known.

\begin{lemma}~\cite[Prop.~6.9]{KLPdynamics}\label{contractingsubsequence}
    Any unbounded sequence in $G$ possesses a subsequence that is $(z^+, z^-)$-contracting for some pair $P^\pm$ of opposite proper parabolic subgroups of $G$ and some $z^\pm \in G/P^\pm$.
\end{lemma}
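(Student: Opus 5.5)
We argue through the Cartan decomposition $G = KA^+K$ of $G$, and reduce the statement to a dynamical statement about the Cartan projection. We fix a maximal $\mathbb{K}$-split torus, a good maximal compact subgroup $K$, and a closed positive Weyl chamber, and write each $g_n = k_n a_n l_n$ with $k_n, l_n \in K$ and $a_n \in A^+$. Since $(g_n)$ is unbounded, $(a_n)$ is unbounded in $A^+$. After passing to a subsequence we may assume three things: $k_n \to k$ and $l_n \to l$ (by compactness of $K$), and there is a nonempty set $\Theta$ of simple roots with $|\alpha(a_n)| \to \infty$ for $\alpha \in \Theta$ while $|\alpha(a_n)|$ stays bounded for $\alpha \notin \Theta$. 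The last choice is possible because there are finitely many simple roots, and at least one of them must be unbounded along an unbounded sequence in $A^+$. We then take $P^+ = P_\Theta$ to be the standard parabolic of type $\Theta$, and $P^-$ the opposite parabolic (for the positive chamber, $P^-$ contains the torus and the negative root groups of the same type). These are proper and opposite.

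The key step is the contraction of the diagonal part. Write the big cell of $G/P^+$ as $U^- P^+/P^+$, where $U^-$ is the unipotent radical of $P^-$. For $u \in U^-$, conjugation gives $a_n u a_n^{-1}$, which scales each root space $\mathfrak{g}_{-\beta}$, $\beta$ a positive root not in the span of $\Theta^c$, by $|\beta(a_n)|^{-1}$. Every such $\beta$ involves some $\alpha \in \Theta$ with positive coefficient. Together with the boundedness of the other simple roots, this shows that $|\beta(a_n)| \to \infty$ for all such $\beta$. Hence $a_n u a_n^{-1} \to 1$ uniformly for $u$ in compact subsets of $U^-$, since the exponential or log coordinates give a polynomial dependence.

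This is where I expect the main care to be needed. The contraction has to be made uniform on compacta, and the chamber indexing has to be checked, because the bounded roots could spoil convergence. To handle this, we pass to a further subsequence so that the bounded part of $a_n$ converges: write $a_n = b_n c_n$ with $b_n$ in the Levi centralising $U^-$ and converging to some $b$. Its conjugation then converges and only adds a harmless limiting factor. The outcome is that $a_n$ converges to the base point $P^+$ uniformly on compacta of the big cell $U^-P^+/P^+$. This big cell is exactly the set of points opposite to $P^-$.

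Finally we translate. Set $z^+ = kP^+$ and $z^- = l^{-1}P^-$. Let $C$ be a compact subset of $G/P^+$ avoiding $Z_{z^-}$, in the sense of the definition, i.e.\ consisting of points opposite to $z^-$. Then $l_n C$ eventually lies in a fixed compact subset of the big cell, because $l_n \to l$ and that set is open. Hence $a_n l_n C \to P^+$ uniformly, and $k_n a_n l_n C \to kP^+ = z^+$ uniformly, because $k_n \to k$ uniformly on $G/P^+$. For non-Archimedean $\mathbb{K}$ the same argument runs with the Bruhat--Tits Cartan decomposition and the absolute values of the roots.
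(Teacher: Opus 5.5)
The paper gives no proof of this lemma. It is quoted as well known, with a pointer to Kapovich--Leeb--Porti, a source written for semisimple Lie groups in the language of symmetric spaces and Cartan projections. Your argument is the standard direct proof behind such statements, and it is correct. The steps are:
\begin{enumerate}
\item the Cartan decomposition $g_n=k_na_nl_n$;
\item a subsequence along which every simple root is either bounded or divergent on $a_n$;
\item $P^+=P_\Theta$ for the nonempty set $\Theta$ of divergent roots;
\item contraction by $a_n$ of the big cell $U^-P^+/P^+$ (the set of points opposite to $P^-$) towards $P^+$;
\item translation to $z^+=kP^+$, $z^-=l^{-1}P^-$.
\end{enumerate}
Compared with the citation, your route is self-contained. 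Through the Bruhat--Tits Cartan decomposition it also treats Archimedean and non-Archimedean $\mathbb{K}$ uniformly, which Section~3 needs since it is stated for an arbitrary local field. Over a non-Archimedean field one replaces $A^+$ by the dominant part of $Z_G(S)(\mathbb{K})$. Up to finite index this is $S(\mathbb{K})$ times a compact group, so the contraction estimate is unchanged. The citation only buys brevity.

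Some details to tidy.
\begin{itemize}
\item The paragraph splitting $a_n=b_nc_n$ is not needed, and it misstates things: the Levi normalises $U^-$, it does not centralise it. Since $a_n\in P^+$, one has $a_n\cdot uP^+=(a_nua_n^{-1})P^+$. Moreover $\mathrm{Ad}(a_n)$ acts on $\mathfrak{u}^-$ only through roots $-\beta$ with $\beta$ involving $\Theta$, so the bounded roots never enter.
\item What forces $|\beta(a_n)|\to\infty$ is dominance, $|\alpha(a_n)|\ge 1$ for all simple $\alpha$, not boundedness of the roots in $\Theta^c$.
\item Nonemptiness of $\Theta$ should be justified by semisimplicity of $G$. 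The simple roots span $X^*_{\mathbb{K}}(S)\otimes\mathbb{Q}$, so if they were all bounded on $a_n\in A^+$, every character of $S$ would be bounded above and below on $a_n$, and $a_n$ would be bounded.
\item The paper's definition literally uses the complement of the set of points opposite to $z^-$, which is a slip. The intended domain is the set of points opposite to $z^-$, as in the proof of Lemma~\ref{parabolic}, and that is what you use. However, your phrase ``avoiding $Z_{z^-}$, \dots, i.e.\ consisting of points opposite to $z^-$'' conflates the two readings; state the intended one directly.
\end{itemize}
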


\begin{lemma}~\cite[Prop.~3.3]{GGKW}
\label{reducetosln}
    Given any pair $P^\pm$ of opposite proper parabolic subgroups of $G$, one can find a representation $\tau: G \rightarrow \mathrm{SL}_d(\mathbb{K})$ and $\tau$-equivariant embeddings $\iota^\pm : G/P^\pm \rightarrow \mathrm{SL}_d(\mathbb{K})/P_1^\pm$, where $P_1^+$ (resp., $P_1^-$) denotes the stabiliser in $\mathrm{SL}_d(\mathbb{K})$ of a line (resp., a hyperplane) in $\mathbb{K}^d$, such that for any $z^\pm \in G/P^\pm$ and any $(z^+, z^-)$-contracting sequence $g_n$ in $G$, the sequence $\tau(g_n)$ is $(\iota^+(z^+), \iota^-(z^-))$-contracting. 
\end{lemma}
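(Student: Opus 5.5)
The plan is to realise $P^+$ as the stabiliser of a line in a suitable irreducible representation, and then read off contraction from the Cartan decomposition.

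\textbf{Step 1: choice of representation.} Fix a maximal $\mathbb{K}$-split torus $S$ and a minimal parabolic contained in $P^+$, with simple restricted roots $\Delta$. Let $\Theta \subset \Delta$ be the simple roots of the Levi factor $L = P^+ \cap P^-$. By a theorem of Tits on irreducible rational representations over $\mathbb{K}$, there is an irreducible $\mathbb{K}$-representation $\tau \colon G \to \mathrm{SL}_d(\mathbb{K})$ with the following property. Its highest restricted weight $\chi$ is orthogonal to exactly the roots in $\Theta$ (a suitable multiple of the sum of the fundamental weights for $\Delta\setminus\Theta$), and its highest weight space $V^\chi$ is a line. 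We may pass to a power of $\chi$ to make the weight space one-dimensional and $\mathbb{K}$-rational.

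The stabiliser of the line $V^\chi$ is then exactly $P^+$. Let $H^-$ be the sum of all other weight spaces. Its stabiliser is exactly $P^-$, using the lowest-weight/dual description of the opposite parabolic. Define
\[\iota^+(gP^+) = \tau(g)V^\chi, \qquad \iota^-(gP^-) = \tau(g)H^-.\]
These maps are well defined, $\tau$-equivariant and injective. They are also continuous closed embeddings, since $G/P^\pm$ is compact.

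\textbf{Step 2: compatibility of opposition.} I will check that $x \in G/P^+$ is opposite to $z^- \in G/P^-$ if and only if the line $\iota^+(x)$ is not contained in the hyperplane $\iota^-(z^-)$. By equivariance it suffices to treat $z^- = P^-$. The Bruhat decomposition reduces this to showing that $\tau(w)V^\chi \subset H^-$ for every Weyl element $w$ not in the big cell. This holds because $w\chi \neq \chi$ for $w \notin W_\Theta$, so $\tau(w)V^\chi$ is a weight space different from $V^\chi$ and therefore lies in $H^-$. Consequently $\iota^+$ maps $(G/P^+)\setminus Z_{z^-}$ into the complement of the hyperplane $\iota^-(z^-)$ in projective space.

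\textbf{Step 3: contraction.} Write $g_n = k_n a_n l_n$ in a Cartan decomposition $G = KA^+K$. After passing to subsequences, $k_n \to k$ and $l_n \to l$. The $(z^+,z^-)$-contracting hypothesis (this is the characterisation underlying \cref{contractingsubsequence}) amounts to
\[z^+ = kP^+, \qquad z^- = l^{-1}P^-, \qquad |\alpha(a_n)| \to \infty \text{ for all } \alpha \in \Delta\setminus\Theta.\]
Every weight of $\tau$ other than $\chi$ has the form $\chi - \sum_{\beta\in\Delta} n_\beta \beta$ with some $n_\alpha > 0$ for an $\alpha \notin \Theta$. Indeed, weights $\chi - \sum_{\beta\in\Theta} n_\beta\beta$ with not all $n_\beta$ zero do not occur, since $\chi$ is $\Theta$-orthogonal and $V^\chi$ is $L$-invariant. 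Hence $\tau(a_n)$ expands $V^\chi$ by a factor that dominates its action on $H^-$ by a ratio tending to infinity. So $\tau(a_n)$ contracts $\mathbb{P}(\mathbb{K}^d)\setminus \mathbb{P}(H^-)$ uniformly on compacta towards $[V^\chi]$. Conjugating by the convergent sequences $\tau(k_n)$ and $\tau(l_n)$ shows that $\tau(g_n)$ is $(\iota^+(z^+),\iota^-(z^-))$-contracting. Since every subsequence has a further subsequence with the same limits, the whole sequence is contracting.

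\textbf{Main obstacle.} I expect the delicate step to be the equivalence, over an arbitrary local field, between the dynamical notion of $(z^+,z^-)$-contraction and the Cartan-projection description in Step 3, particularly in the non-Archimedean case. I would import this equivalence from \cite{KLPdynamics} and \cite{GGKW} rather than reprove it. The representation-theoretic input, namely the existence of $\tau$ with line stabiliser exactly $P^+$, is Tits' classical result.
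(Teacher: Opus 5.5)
Your proposal is correct and follows the standard argument behind \cite[Prop.~3.3]{GGKW}, which the paper cites without proof: a Tits representation whose highest-weight line has stabiliser exactly $P^+$, with the complementary sum of weight spaces $H^-$ giving $\iota^-$, and a Cartan-decomposition estimate showing that the gap between $\chi$ and every other weight is controlled by $\min_{\alpha\in\Delta\setminus\Theta}|\alpha(a_n)|$. Your one imported ingredient is the equivalence between $(z^+,z^-)$-contraction and $\Theta$-regularity of the Cartan projection, with $z^+=\lim k_nP^+$ and $z^-=\lim l_n^{-1}P^-$; the paper itself uses the $\mathrm{SL}_d$ case of this fact without proof in the proof of \cref{parabolic}, so relying on it puts you on the same footing as the paper.
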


\begin{lemma}\label{parabolic}
    Let $h \in G$, let $P^\pm$ be a pair of opposite proper parabolic subgroups of $G$, and let $g_n$ be a $(z^+, z^-)$-contracting sequence in $G$ for some $z^\pm \in G/P^\pm$. If the sequence $g_n^{-1} h g_n$ is bounded in $G$, then $h z^+ = z^+.$
\end{lemma}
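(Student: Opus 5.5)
Set $k_n := g_n^{-1} h g_n$, so that $h g_n = g_n k_n$. The idea is to apply both sides to a generic point of $G/P^+$, use the contraction to send both to $z^+$, and conclude $hz^+ = z^+$ by continuity.

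Since $k_n$ is bounded, after passing to a subsequence we may assume $k_n \to k$ in $G$. The set $U := (G/P^+) \setminus Z_{z^-}$ is open and dense in $G/P^+$: it is the complement of a proper Zariski-closed subset (a Schubert-type variety), and over a local field such a set is open and dense in the analytic topology. We should double-check the phrase ``opposite to $z^-$'' in the definition against \cite{KLPdynamics}. The contraction must hold on the big cell of points in general position with respect to $z^-$, so whichever convention is intended, $U$ is that open dense set and is the domain of uniform convergence. Because $k^{-1}U$ is also open and dense, we may pick $w \in U$ with $kw \in U$. Choose a compact neighbourhood $N \subset U$ of $kw$. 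Since $k_n w \to kw$, we have $k_n w \in N$ for all large $n$.

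Now compute in two ways. On one hand, $g_n k_n w \to z^+$, because $g_n \to z^+$ uniformly on the compact set $N$ and $k_n w \in N$. On the other hand, $g_n k_n w = h g_n w$, and $g_n w \to z^+$ since $w \in U$. By continuity of the action of $h$, $h g_n w \to h z^+$. Uniqueness of limits in the Hausdorff space $G/P^+$ gives $h z^+ = z^+$.

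The main obstacle is the precise use of uniform convergence: we need the points $k_n w$ to move inside a fixed compact subset of $U$, and this is exactly why we choose $w$ with $kw \in U$ rather than just $w \in U$. Everything else is routine. If convenient, one could instead use \cref{reducetosln} to reduce to $\mathrm{SL}_d(\mathbb{K})$ acting on projective space, where $U$ is the complement of a hyperplane and the argument is completely explicit. Since $\iota^+$ is an injective equivariant map, $\tau(h)\iota^+(z^+) = \iota^+(z^+)$ then yields $hz^+ = z^+$.
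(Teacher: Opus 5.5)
Your argument is correct, and it takes a genuinely different route from the paper's.

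\textbf{The paper's route.} The paper first invokes \cref{reducetosln} to pass to $\mathrm{SL}_d(\mathbb{K})$ acting on lines, and writes $g_n = k_n a_n k_n'$ in the Cartan decomposition. It then uses the fact that contraction forces $\lambda_{1,n}/\lambda_{2,n} \to \infty$, with $z^+ = k \cdot \mathbb{K}e_1$ for $k = \lim k_n$. From the first column of the bounded matrices $a_n^{-1}k_n^{-1}hk_na_n$ it reads off that the entries $x_{2,n},\ldots,x_{d,n}$ of the first column of $k_n^{-1}hk_n$ tend to $0$. Hence $k^{-1}hk$ fixes $\mathbb{K}e_1$.

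\textbf{Your route.} You stay on $G/P^+$ and use the identity $hg_n = g_n k_n$ with $k_n \to k$. Uniform convergence on a compact neighbourhood of $kw$ inside the attracting set, together with pointwise convergence at $w$, forces $hz^+ = z^+$.

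\textbf{What each buys.} Your approach avoids \cref{reducetosln}, the Cartan decomposition, and the singular-value description of contracting sequences altogether. The only input is that the attracting set is open and dense, so that some $w$ in it also has $kw$ in it; this is the standard big-cell fact. Your argument is also uniform in the local field, and it works for any continuous action on a locally compact Hausdorff space with this kind of north--south dynamics. The paper's computation is in exchange completely explicit and linear-algebraic.

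\textbf{On the convention.} Your caveat is well founded. Read literally, the paper's definition removes the points opposite to $z^-$. That leaves a closed, nowhere dense set (a single point for $\mathrm{SL}_2$), on which your argument could not even start. The intended convention, as in \cite{KLPdynamics} and as implicitly used in the paper's own proof through $\lambda_{1,n}/\lambda_{2,n} \to \infty$, is uniform convergence on the open cell of points opposite to $z^-$. That is exactly what you used.
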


\begin{proof}
By Lemma \ref{reducetosln}, it suffices to consider the case where $G = \mathrm{SL}_d(\mathbb{K})$ and $P^+$ (resp., $P^-$) is the stabiliser in $G$ of the line $\mathbb{K}e_1$ (resp., the hyperplane $\mathbb{K}e_2 \oplus \cdots \oplus \mathbb{K}e_d$) in $\mathbb{K}^d$. Let $g_n = k_n a_n k_n'$ be the $KA^+K$ decomposition of $g_n$. We can assume up to subsequence that $k_n$ (resp., $k_n'$) converges, say to $k$ (resp., $k'$). Then $a_n$ is a diagonal matrix of the form $\mathrm{diag}(\lambda_{1,n}, \ldots, \lambda_{d,n})$, where $\lambda_{1,n} \geq \cdots \geq \lambda_{d,n} > 0$, and since $g_n$ is $(z^+, z^-)$-contracting, we have that  $\lambda_{1,n}/\lambda_{2,n} \rightarrow \infty$ and $z^+ = k \cdot \mathbb{K}e_1$. 

Since the sequence $g_n^{-1} h g_n = k_n'^{-1} (a_n^{-1}k_n^{-1}h k_na_n)k_n$ is bounded, so is the sequence $a_n^{-1}k_n^{-1}h k_na_n$. Now if $(x_{1,n}, \ldots, x_{d,n})^T$ is the first column of the matrix $k_n^{-1}h k_n$, then $(\frac{\lambda_{1,n}}{\lambda_{1,n}}x_{1,n}, \frac{\lambda_{1,n}}{\lambda_{2,n}}x_{2,n}, \ldots, \frac{\lambda_{1,n}}{\lambda_{d,n}}x_{d,n})^T$ is the first column of the matrix $a_n^{-1}k_n^{-1}h k_na_n$. Since $x_{i,n}$ are bounded, it follows that $x_{i,n} \rightarrow 0$ as $n \rightarrow \infty$ for $i=2, \ldots, d$. Since $k_n^{-1}h k_n \rightarrow k^{-1} h k$, it follows that $k^{-1} h k$ fixes $\mathbb{K} e_1$, and so $h$ fixes $k \cdot \mathbb{K}e_1 = z^+$.
\end{proof}

\begin{proof}[Proof of \cref{simple case}]
    Suppose that $D_f$ is not reduced to the identity and $f(\Gamma)$ is not bounded. We show that $f(\Gamma)$ is contained in a proper algebraic subgroup of $G$. We first note that it suffices to show that $D_f$ is contained in a proper algebraic subgroup $L$ of $G$. Indeed, by \cref{defect group normal} it follows that $f(\Gamma)$ is contained in the normaliser of $\zar{\Delta_f}$, which is a nontrivial proper algebraic subgroup of $G$.
    
    Now since $f(\Gamma)$ is not bounded in $G$ by assumption, we have by Lemma~\ref{contractingsubsequence} that there exists a pair of opposite proper parabolic subgroups $P^\pm < G$, $z^\pm \in G/P^\pm$, and a $(z^+, z^-)$-contracting sequence $f(\gamma_n)$ in $f(\Gamma)$. By \cref{BOL D}, the sequence $f(\gamma_n)^{-1} h f(\gamma_n)$ is bounded for each $h \in D_f$. It then follows from Lemma \ref{parabolic} that $D_f \subset \mathrm{Stab}_G(z^+)$. This completes the proof of \cref{simple case} since $\mathrm{Stab}_G(z^+)$ is a proper algebraic subgroup of $G$ (indeed, a conjugate of $P^+$).
\end{proof}

\subsection{Examples}

The simple case already exhibits two striking examples. Let $G = \mathrm{PSL}_2(\RR)$, and let $F < G$ be a non-abelian free subgroup.\footnote{Note that $\mathrm{PSL}_2(\mathbb{R})$, while not itself an affine algebraic group, is the identity component in the Hausdorff topology of the $\mathbb{R}$-points of an adjoint simple affine algebraic group.} Then $F$ is easily seen to be Zariski-dense in $G$. 

\begin{example}
\label{ex bounded distance}
    Let $\Gamma \coloneqq F$, and define a map $f \colon \Gamma \to G$ so that $f(g) = g$ for all $g \neq 1$ and $f(1) \neq 1$. Then $f(\Gamma)$ is unbounded and generates a group containing $F$, and is therefore Zariski-dense. Since $f$ is not a homomorphism,  \cref{simple case} implies that $f$ cannot be a quasihomomorphism.
\end{example}

The above example demonstrates how delicate the notion of a quasihomomorphism is; even taking a true homomorphism and changing a single value can result in a map that is not a quasihomomorphism.

\medskip

Next, we exhibit a counterexample to \cite[Theorem 4.28]{rank1}\footnote{Theorem 4.24 in the arXiv version}. The statement of the latter result is as follows. Let $M$ be a closed negatively curved manifold, and $x_1, \ldots, x_n \in \pi_1(M)$ be primitive elements that pairwise do not have conjugate powers. Let $G$ be a non-abelian connected Lie group and let $g_1, \ldots, g_n \in G$. Then it is claimed that there exists a homogeneous quasihomomorphism $\pi_1(M) \to G$ taking $x_i$ to $g_i$. Recall that a quasihomomorphism is {\em homogeneous} if its restriction to every cyclic subgroup is a homomorphism. We exhibit a counterexample in one of the simplest cases: where $M$ is a closed surface of genus two and $G = \mathrm{PSL}_2(\mathbb{R})$.

\begin{example}
\label{ex BV24}
    Let $F$ be a discrete free subgroup of $G=\mathrm{PSL}_2(\mathbb{R})$ of rank $4$, and let $g_1, h_1, g_2, h_2 \in F$ form a free basis. Let $\Gamma$ be the fundamental group of a closed surface of genus~$2$, and let $a_1, b_1, a_2, b_2$ be the standard generating set. Then the elements $a_i, b_i$ are primitive and pairwise do not have conjugate powers, since they are independent in the abelianisation of $\Gamma$. However there is no unbounded (let alone homogeneous) quasihomomorphism $\Gamma \to G$ mapping $a_i \to g_i, b_i \to h_i, i = 1, 2$. Indeed, the image of $\Gamma$ under a hypothetical such quasihomomorphism $f$ would generate a group containing the Zariski-dense subgroup $F$ of $G$, and therefore $f$ would necessarily be a homomorphism by \cref{simple case}, violating the defining relation $[a_1, a_2][b_1, b_2] = 1$.
\end{example}

\begin{remark}
\label{rem BV24}
    In addition to  the above straightforward example, notice that unipotent groups have no non-trivial compact quotients and hence \cref{no compact factors} provides a contradiction to the claim that there exist quasihomomorphisms $\Gamma \to G$ that are not constructible in the sense of Fujiwara--Kapovich when G is a non-abelian unipotent real algebraic group (cf. \cite[Theorem 4.31]{rank1}\footnote{Theorem 4.27 in the arXiv version}).
\end{remark}

\section{Real algebraic groups}
\label{s:lag}

In this section we collect some general facts about real algebraic groups that will be used in the proof of the general case. We refer the reader to \cite{borel} for general background. Given a subset $X \subset G$, we denote by $Z_G(X)$ the centraliser of $X$, and by $N_G(X)$ the normaliser of $X$ in $G$.

\begin{lemma}
\label{nested intersection}
    Let $G$ be a real algebraic group. Then any nested intersection of algebraic subgroups of $G$ eventually stabilises.
\end{lemma}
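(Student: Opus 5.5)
The plan is to use Noetherianity of the Zariski topology together with a dimension and component count. Let $G_1 \supseteq G_2 \supseteq \cdots$ be a descending chain of algebraic subgroups of $G$; here ``nested intersection'' means we consider the intersections $\bigcap_{i \le n} H_i$ of an arbitrary family of algebraic subgroups $H_i$, so it suffices to treat descending chains. The invariant I would attach to each $G_n$ is the pair $(\dim G_n, [G_n : G_n^\circ])$, where $G_n^\circ$ is the Zariski-identity component. Both entries are finite: a real algebraic group is the set of real points of an algebraic group, or is at least a Zariski-closed subset of some $\mathrm{GL}_d(\RR)$ defined by polynomials, and so it has finitely many Zariski-irreducible components.

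First I would show that the dimension sequence $\dim G_n$ is non-increasing, hence eventually constant, say equal to $d$ for all $n \ge N$. Next, for $n \ge N$, $G_{n+1}^\circ \subseteq G_n^\circ$ are irreducible closed sets of the same dimension, so they coincide. Consequently $G_{n+1}$ is a union of cosets of the common group $H := G_N^\circ$ inside $G_n$. This gives $[G_{n+1}:H] \le [G_n:H]$, with equality only when $G_{n+1}=G_n$. A non-increasing sequence of positive integers stabilises, so the chain stabilises.

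A cleaner alternative avoids the counting altogether: every algebraic subgroup is Zariski-closed in the affine variety $G$, and the Zariski topology on an affine variety (real or complex points) is Noetherian, because $\RR[x_{ij}, \det^{-1}]$ is a Noetherian ring by Hilbert's basis theorem. A descending chain of closed sets corresponds to an ascending chain of vanishing ideals, which stabilises. Since the closed sets are recovered as the zero loci of their ideals, the chain of closed sets stabilises as well. I would present this one-line Noetherian argument as the proof and keep the dimension argument as intuition.

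The only subtle point, which I expect to be the main obstacle, is to make sure the Zariski topology used is the one on real points: an algebraic subgroup of $G$ should be the zero locus in $G \subseteq \mathrm{GL}_d(\RR)$ of a set of real polynomials. With that convention, the bijection between closed sets and radical-type ideals $I(X)$, with $X = V(I(X))$, holds verbatim over $\RR$, and the Noetherian argument applies.
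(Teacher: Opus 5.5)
Your proof is correct, but your main argument takes a different route from the paper's.

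\textbf{The paper's route.} It works in the Hausdorff (Lie) topology. The real dimension of the $G_n$ eventually stabilises. A closed subgroup of full dimension is open. Real algebraic groups have only finitely many connected components (the paper cites Mostow's appendix for this). So once the dimension is constant, the number of connected components can only drop finitely many times.

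\textbf{Your route.} Your main argument needs only two facts: algebraic subgroups are Zariski-closed in $G \subseteq \mathrm{GL}_d(\RR)$, and $\RR[x_{ij},\det^{-1}]$ is Noetherian. You correctly note that $X = V(I(X))$ holds trivially for Zariski-closed $X$ over $\RR$, with no Nullstellensatz needed, so the argument is complete.

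\textbf{What each buys.} Your approach is more elementary: it replaces Lie theory and the finiteness of components of real algebraic varieties (Whitney's theorem) with Hilbert's basis theorem. It also applies to arbitrary descending chains of Zariski-closed subsets, not just subgroups. Finally, it works verbatim over any field, including $p$-adic fields. There the paper's component count breaks down, since non-discrete $p$-adic groups are totally disconnected and descending chains of open subgroups need not stabilise.

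Your secondary dimension-and-index argument is the Zariski-topology twin of the paper's proof. It uses Zariski dimension, the irreducible identity component, and finitely many irreducible components in place of Lie dimension, openness, and finitely many Hausdorff components.

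The paper's version fits the Lie-theoretic toolkit used in the rest of that section. It would also extend to descending chains of closed subgroups of a Lie group once the subgroups are known to have finitely many components. For the statement at hand, though, your Noetherian argument is the cleaner proof.
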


\begin{proof}
    Let $G_1 \supset G_2 \supset \cdots$ be a nested intersection of algebraic subgroups of $H$. Eventually, the real dimension stabilises. Since a top-dimensional subgroup of a real Lie group is open, and since real algebraic groups have only finitely many connected components (see the appendix of \cite{mostow}), we conclude.
\end{proof}

\begin{lemma}
\label{centralisers}
    Let $X \subset G$ be a subset of a real algebraic group. Then there exists a finite subset $Y \subset X$ such that $Z_G(X) = Z_G(Y)$.
\end{lemma}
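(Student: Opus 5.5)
The plan is to reduce the statement to \cref{nested intersection}, using the fact that a centraliser is an intersection of algebraic subgroups. For each $x \in G$, the centraliser $Z_G(x) = \{g \in G \mid gx = xg\}$ is cut out by polynomial equations in the entries of $g$, so it is an algebraic subgroup of $G$. Consequently, for any subset $Y \subset X$,
\[ Z_G(Y) = \bigcap_{y \in Y} Z_G(y) \]
is an algebraic subgroup, and $Z_G(X) \subset Z_G(Y)$.

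Next I will choose a finite $Y$ for which $Z_G(Y)$ is minimal among all finite subsets of $X$. Such a $Y$ exists: if no finite subset gave a minimal centraliser, I could build finite sets $Y_1 \subset Y_2 \subset \cdots$ in $X$ with
\[ Z_G(Y_1) \supsetneq Z_G(Y_2) \supsetneq \cdots, \]
namely by adding at each stage an element that strictly shrinks the centraliser. This is a strictly descending chain of algebraic subgroups, which \cref{nested intersection} rules out. A more hands-on alternative is to first enlarge $Y$ until $\dim Z_G(Y)$ is minimal, and then shrink the group further using the finitely many connected components.

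With a minimal $Y$ in hand, take any $x \in X$. Then $Z_G(Y \cup \{x\}) \subset Z_G(Y)$, and minimality forces equality, so $Z_G(Y) \subset Z_G(x)$. Since $x \in X$ was arbitrary, $Z_G(Y) \subset Z_G(X)$, and together with the reverse inclusion this gives $Z_G(Y) = Z_G(X)$.

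The only real obstacle is making sure the descending chain condition in \cref{nested intersection} applies, which it does because every centraliser involved is an algebraic subgroup. An even simpler route avoids the chain argument: $Z_G(X)$ is the intersection of the linear subspaces $\{A \mid Ax = xA\}$ of the matrix algebra with $G$, and finite-dimensionality of that algebra alone gives a finite $Y$.
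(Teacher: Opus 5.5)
Your proof is correct, and it reaches \cref{nested intersection} by a different reduction than the paper.

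The paper argues topologically. Since $X$ is separable, it has a countable dense subset $\{x_1, x_2, \ldots\}$. Commuting with a fixed $g$ is a closed condition, so $Z_G(X) = Z_G(\{x_1, x_2, \ldots\})$. This is the intersection of the single descending chain $Z_G(\{x_1, \ldots, x_n\})$, which stabilises.

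You instead use \cref{nested intersection} purely as a descending chain condition. You pick a finite $Y$ with $Z_G(Y)$ minimal among centralisers of finite subsets of $X$, and then absorb every $x \in X$ by minimality. Your one-element-at-a-time step is justified: if $Z_G(Y_k \cup \{x\}) = Z_G(Y_k)$ for all $x \in X$, then $Z_G(Y_k) = Z_G(X)$ is already minimal. This is the standard Noetherian argument. It needs neither separability nor closedness of centralisers, and it works in any group with the descending chain condition on centralisers of finite sets.

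Your final linear-algebra route is more elementary still. It uses the paper's standing convention that $G \subset \mathrm{GL}_n(\mathbb{R})$ is linear. The subspace $\bigcap_{x \in X} \{A \in M_n(\mathbb{R}) : Ax = xA\}$ equals the intersection over a finite $Y \subset X$ by a dimension count; equivalently, take $Y \subset X$ to be a basis of the linear span of $X$. So the lemma holds for an arbitrary subgroup of $\mathrm{GL}_n$ over any field, with no appeal to Lie dimension or finiteness of connected components.

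By contrast, the paper's route and your main route are phrased intrinsically in terms of algebraic subgroups. They would transfer to any setting where \cref{nested intersection} holds, without depending on a choice of linear embedding.
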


\begin{proof}
    Because $X$ is a subspace of the separable metrisable space $G$, we have that~$X$ is itself separable. Let $\{x_1, x_2, \ldots \}$ be a countable dense subset of $X$. Then $Z_G(X) = Z_G(\{x_1, x_2, \ldots\})$. This shows that $Z_G(X)$ is the nested intersection of the algebraic subgroups $Z_G(\{x_1, \ldots, x_n\})$, $ n \geq 1$, so we conclude by \cref{nested intersection}.
\end{proof}

Let $G = R \ltimes U$, where $U$ is unipotent and $R$ is reductive, and let $\frakg, \fraku, \frakr$ denote the Lie algebras of $G, U, R$, respectively. We also denote by $Z$ the centre of $U$ and by $\frakz$ the Lie algebra of $Z$. Recall that the exponential map $\exp \colon \fraku \to U$ is a homeomorphism. The following extends this slightly.

\begin{prop}
\label{exp local homeo}
    There exists an open neighbourhood $0 \in \omega \subset \frakg$ such that $\exp$ restricted to $\omega + \fraku$ is a homeomorphism onto its image.
\end{prop}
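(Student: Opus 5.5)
Since $\frakg = \frakr \oplus \fraku$ as vector spaces, I would take $\omega$ to be a small open neighbourhood of $0$ inside $\frakr$, rather than in all of $\frakg$. Then $\omega + \fraku$ is an open neighbourhood of $0$ in $\frakg$, invariant under translation by $\fraku$. The aim is to show that $\exp$ is injective and open on this set; continuity is automatic.

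\emph{Step 1: a semidirect-product chart.} Consider the map
\[\Phi \colon \frakr \times \fraku \to G, \qquad (X, Y) \mapsto \exp(X)\exp(Y).\]
Near $0 \in \frakr$ the map $\exp|_{\frakr}$ is a diffeomorphism onto a neighbourhood $V$ of $1$ in $R$. The map $\exp \colon \fraku \to U$ is a global homeomorphism, and $R \times U \to G$, $(r,u) \mapsto ru$, is a homeomorphism. Hence $\Phi$ is a homeomorphism from $\omega_0 \times \fraku$ onto the open set $VU$, for a suitable neighbourhood $\omega_0$ of $0$ in $\frakr$.

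\emph{Step 2: compare $\exp(X+Y)$ with $\Phi$.} Write $\exp(X+Y) = r(X,Y)\,u(X,Y)$ with $r \in R$ and $u \in U$. Projecting to $G/U \cong R$ is a homomorphism whose differential kills $\fraku$, so $r(X,Y) = \exp(X)$. Therefore
\[\exp(X+Y) = \exp(X)\,\psi_X(Y), \qquad \psi_X(Y) := \exp(-X)\exp(X+Y) \in U.\]
It remains to show that for each $X \in \omega$ the map $\psi_X \colon \fraku \to U$ is a homeomorphism, and that this holds jointly continuously in $X$. Then $(X,Y) \mapsto (X, \log\psi_X(Y))$ is a homeomorphism of $\omega \times \fraku$, and $\exp|_{\omega+\fraku}$ equals $\Phi$ composed with it, so it is a homeomorphism onto the open set $VU$.

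\emph{Step 3: $\psi_X$ is a homeomorphism of $\fraku$. This is the main obstacle.} My first approach is to filter $\fraku$ by its lower central series $\fraku = \fraku_1 \supset \fraku_2 \supset \cdots$. Each $\fraku_i$ is $\ad(\frakg)$-stable and $[\fraku, \fraku_i] \subset \fraku_{i+1}$. Using the Baker--Campbell--Hausdorff / Zassenhaus-type formula, which converges for $X$ small, or more robustly the identity
\[\frac{d}{dt}\exp(X+tY) = \exp(X+tY)\cdot\frac{1-e^{-\ad(X+tY)}}{\ad(X+tY)}(Y),\]
one sees the following. Modulo $\fraku_{i+1}$, the map $\log\psi_X$ acts on the graded piece $\fraku_i/\fraku_{i+1}$ as the linear map $\frac{1-e^{-\ad X}}{\ad X}$ plus terms depending only on lower graded components. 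For $X$ in a small enough $\omega$ this linear operator is invertible, since its eigenvalues are close to $1$. A triangular induction on the filtration then shows that $\log\psi_X$ is a bijective polynomial-type map $\fraku \to \fraku$ with inverse depending continuously on $(X,Y)$.

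Alternatively, the differential computation above shows $\psi_X$ is a local diffeomorphism everywhere. Properness, and hence bijectivity since $\fraku$ is simply connected, would then follow from the same graded estimate. I expect this uniform invertibility and properness in $Y$ to be the delicate point; the filtration argument is what I would try first.
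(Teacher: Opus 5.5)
Your proposal is correct and is essentially the paper's argument. Both proofs peel off one central layer of $\fraku$ at a time: the paper inducts on $\dim U$ by passing to $G/Z$ with $Z$ the centre of $U$, while you run a triangular induction along the lower central series. The key input in both is that adding an element $W$ of the central layer changes $\exp(X+Y)$ by right multiplication by a linear isomorphism of $W$ that depends only on the $\frakr$-part $X$; this is exactly \cref{BCH consequence}, with $\varphi_r(z)=\exp\bigl(\tfrac{1-e^{-\ad r}}{\ad r}z\bigr)$.

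The differences are technical. You obtain this identity from the derivative-of-exp formula along $t\mapsto X+Y+tW$, using that $\ad(X+Y+tW)^kW\equiv\ad(X)^kW$ modulo $\fraku_{i+1}$. This avoids the paper's BCH convergence estimate. Your product chart $\exp(X)\psi_X(Y)$ also makes the continuity of the inverse explicit. You should actually write out the step where you only say ``one sees'', but it goes through as you describe.
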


The rest of this section is devoted to the proof of \cref{exp local homeo}, and is independent from the rest of the paper.

\begin{lemma}
\label{BCH consequence}
    For all sufficiently small $r \in \frakr$ there exists a linear isomorphism $\varphi_r \colon \frakz \to Z$ such that
    \[\exp(r + u + z) = \exp(r + u)\varphi_r(z)\]
    for all $u \in \fraku, z \in \frakz$.
\end{lemma}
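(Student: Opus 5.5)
The plan is to work in a faithful matrix realisation $G \subset \mathrm{GL}_N(\RR)$, where $\exp$ is the matrix exponential, and to exploit that $\frakz$ is central in $\fraku$ and that $\frakg = \frakr \ltimes \fraku$ with $[\frakg, \fraku] \subset \fraku$. Fix $r \in \frakr$ and $u \in \fraku$, and put $X = r+u$. For $z \in \frakz$, the element $z$ commutes with $u$ (as $\frakz$ is the centre of $\fraku$, $Z$ being connected unipotent), but in general not with $r$. So I would not expect $\exp(X+z) = \exp(X)\exp(z)$. Instead, I would look for a correction that is linear in $z$ and depends only on $r$.

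The key observation is that $\frakz$ is an ideal of $\frakg$: $Z$ is characteristic in $U$, and $U$ is normal in $G$. Hence $\ad(X)$ preserves $\frakz$, and $\ad(u)$ kills $\frakz$. So on $\frakz$ we have $\ad(X)|_{\frakz} = \ad(r)|_{\frakz}$.

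I would use the derivative formula for the exponential, or equivalently solve the ODE for $t \mapsto \exp(X)^{-1}\exp(X+tz)$. Since $\frakz$ is abelian and $\ad(X)$-invariant, the second-order terms in $z$ vanish. Concretely, consider $g(s) = \exp(sX)^{-1}\exp(s(X+z))$. It satisfies
\[ g'(s) = \Ad(\exp(-sX))(z)\, g(s) = e^{-s\,\ad(X)}(z)\, g(s). \]
The coefficient $e^{-s\,\ad(X)}(z)$ lies in the abelian algebra $\frakz$, and these coefficients commute for different $s$. Therefore
\[ g(1) = \exp\Bigl(\int_0^1 e^{-s\,\ad(X)}z\,ds\Bigr) = \exp\Bigl(\tfrac{1-e^{-\ad(r)}}{\ad(r)}\Big|_{\frakz}(z)\Bigr). \]
Here the last equality uses $\ad(X)|_{\frakz} = \ad(r)|_{\frakz}$. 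I would define
\[ \varphi_r(z) = \exp(\psi_r(z)), \qquad \psi_r = \frac{1-e^{-\ad(r)}}{\ad(r)}\Big|_{\frakz}. \]
Since $\exp\colon\frakz\to Z$ is an isomorphism of vector groups, $\varphi_r$ is linear (under the identification $Z \cong \frakz$) as soon as $\psi_r$ is a linear isomorphism.

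The remaining step is invertibility of $\psi_r$. Its eigenvalues are $(1-e^{-\lambda})/\lambda$ for $\lambda$ an eigenvalue of $\ad(r)|_{\frakz}$, interpreted as $1$ when $\lambda = 0$. These vanish only when $\lambda \in 2\pi i\ZZ\setminus\{0\}$. So for $r$ small enough that every such $\lambda$ has modulus less than $2\pi$, the map $\psi_r$ is invertible. Alternatively, $\psi_r$ depends continuously on $r$ and $\psi_0 = \mathrm{id}$, which gives the same conclusion.

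The main obstacle I anticipate is rigorously justifying the commuting-coefficient ODE step. One way is to check that both sides of the claimed identity solve the same linear ODE with the same initial value. The other is to verify directly that $\frac{d}{ds}\exp(\int_0^s\cdots)$ has the stated form, using that the integrands lie in an abelian subalgebra. Either route also requires care with the right-action and left-multiplication conventions.
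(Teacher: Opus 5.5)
Your proof is correct, but it takes a different route from the paper's. The paper sets $\varphi_r(z)=\exp(-(r+u))\exp(r+u+z)$ and proves independence of $u$ by expanding the Baker--Campbell--Hausdorff series $B(X,Y)$ term by term. It argues that the series converges for small $r$ uniformly in $u,z$, because only boundedly many entries from the nilpotent ideal $\fraku$ survive in each bracket. It then shows that, in every bracket of length at least $2$ containing a $z$, the entries $r+u$ can be replaced by $r$, since $\frakz$ is an ideal killed by $\ad(u)$. Linearity is obtained indirectly, from additivity (taking $u=z_1\in\frakz$) plus continuity. Injectivity comes from local injectivity of $\exp$ near small $r$.

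Your ODE computation uses the same two structural facts: $\frakz$ is an abelian ideal of $\frakg$, and $\ad(r+u)|_{\frakz}=\ad(r)|_{\frakz}$. From them it produces the closed formula $\varphi_r=\exp\circ\frac{1-e^{-\ad(r)}}{\ad(r)}|_{\frakz}$. Independence of $u$, linearity, and invertibility (via the spectral mapping) then follow at once. Your argument also shows the identity holds for every $r\in\frakr$; smallness is needed only to make $\psi_r$ invertible, and precisely it suffices that $\ad(r)|_{\frakz}$ has no eigenvalue in $2\pi i\ZZ\setminus\{0\}$.

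The paper's route needs $\exp(X)\exp(Y)=\exp(B(X,Y))$ with $u,z$ arbitrarily large and only $r$ small, which goes beyond the standard small-ball form of BCH. Yours involves no convergence questions at all.

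The step you flag as the main obstacle is routine. Set $P(s)=\int_0^s e^{-t\ad(r)}z\,dt\in\frakz$. Then $P(s)$ and $P'(s)$ commute as matrices, so $\frac{d}{ds}\exp(P(s))=P'(s)\exp(P(s))$. Uniqueness of solutions to the linear ODE $g'=e^{-s\ad(r)}(z)\,g$, $g(0)=I$, finishes the argument.
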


\begin{proof}
    We set
    \[\varphi_r(z) \coloneqq \exp(-(r+u))\exp(r+u+z).\]
    The main claim is that $\varphi_r(z)$ indeed depends only on $r$ and $z$. Let us start by showing how this is enough to conclude. For $z_1, z_2 \in \frakz$, we have:
    \begin{align*}
    \exp(r) \varphi_r(z_1 + z_2) &= \exp(r + z_1 + z_2) = \exp(r + z_1) \varphi_r(z_2) \\&= \exp(r) \varphi_r(z_1)\varphi_r(z_2).
    \end{align*}
    So $\varphi_r$ is additive, and since  $\varphi_r$ is continuous, it follows that $\varphi_r$ is linear. Moreover, since $\exp$ is a local homeomorphism, for small enough $r$, the map $z \mapsto \exp(r+z)$ is locally injective, hence so is $\varphi_r$, so that $\varphi_r$ must therefore be a linear isomorphism.

    It remains to show that $\varphi_r(z)$ is independent of $u$. Recall the Baker--Campbell--Hausdorff formula \cite[Section 5]{hall:book}. Let
    \[B(X,Y) =  \sum_{n = 1}^\infty\frac {(-1)^{n-1}}{n}
    \sum_{\begin{smallmatrix} t_1 + s_1 > 0 \\ \vdots \\ t_n + s_n > 0 \end{smallmatrix}}
    \frac{[ X^{t_1} Y^{s_1} X^{t_2} Y^{s_2} \dotsm X^{t_n} Y^{s_n} ]}{\left(\sum_{j = 1}^n (t_j + s_j)\right) \cdot \prod_{i = 1}^n t_i! s_i!},\]
    where
    \[ [ X^{t_1} Y^{s_1} \dotsm X^{t_n} Y^{s_n} ] = [ \underbracket{X,\dotsm[X}_{t_1} ,[ \underbracket{Y,\dotsm[Y}_{s_1} ,\,\dotsm\, [ \underbracket{X,\dotsm[X}_{t_n} ,[ \underbracket{Y,\dotsm Y}_{s_n} ]]\dotsm].\]
    We set $m_n \coloneqq \sum (t_j + s_j)$, which is the number of occurrences of $X$ and $Y$ in the above expression. The BCH formula states that
    \[\exp(X)\exp(Y) = \exp(B(X, Y))\]
    whenever $B(X, Y)$ converges. Note that $B(X, Y)$ indeed converges for sufficiently small $X, Y$.
    
    Setting $X = -(r+u)$ and $Y = (r+u+z)$, we need to first show that the sum $B(X, Y)$ converges for all sufficiently small $r \in \frakr$ and for all $u \in \fraku, z \in \frakz$. Each iterated commutator has the following form:
    \begin{equation}
    \label{eq:BCH}
    \pm[ (r+u)^{t_1} (r+u+z)^{s_1} \dotsm (r+u)^{t_n} (r+u+z)^{s_n} ].
    \end{equation}
    Develop this expression using multilinearity, by separating all occurrences of $r$ from occurrences of $u$ and $(u+z)$. Because $\fraku$ is a nilpotent ideal, there is a uniform bound on the number of $u$ or $u+z$ terms that appear in such expression, unless it vanishes. Hence we obtain a bound of the form
    \[\|[ (r+u)^{t_1} (r+u+z)^{s_1} \dotsm (r+u)^{t_n} (r+u+z)^{s_n} ]\| \leq (C_{u, z}\|\ad(r)\|)^{m_n},\]
    where $C_{u, z}$ is a constant that depends only on the nilpotency class of $\fraku$ and the norms $\|u\|$ and $\|u+z\|$. This proves convergence, and hence the validity of the BCH formula, for small $r \in \frakr$ independently of $u, z$, as desired.
    
    Now let us study \cref{eq:BCH} further. When $m_n = 1$, there are only two terms: $-r-u$ and $r+u+z$. They sum to $z$, and in particular their sum is independent of $u$. Therefore we need to show that when $m_n \geq 2$, the value of \cref{eq:BCH} is independent of $u$.
    
    So now assume that $m_n \geq 2$. Up to grouping terms together, we may assume that $t_1, s_1 \neq 0$. Once again, we develop the expression using multilinearity, this time separating all occurrences of $(r+u)$ from occurrences of $z$. Up to applying the Jacobi identity, it suffices to show that
    \[[r+u,\cdots,[z, \cdots ]] = [r,\cdots,[z, \cdots ]],\]
    that is, in an iterated commutator where all terms are either $(r+u)$ or $z$, at least one $z$ appears, and the first term is $(r+u)$, we may replace the first term with $r$. Indeed, this is because $\frakz$ is an ideal, and $[r+u, z'] = [r,z']$ for all $z' \in \frakz$.
\end{proof}

\begin{proof}[Proof of \cref{exp local homeo}]
We proceed by induction on the dimension of $U$. For the base case, if $U$ is trivial, the statement is simply the fact that $\exp$ is a local homeomorphism.

Now suppose that $U$ has positive dimension, hence so does its centre $Z$. Let $\pi \colon G \to G/Z$ denote the projection. Applying the induction hypothesis on $G/Z$, we obtain an open neighbourhood $0 \in \omega \subset \frakg$ such that $\exp \colon \frakg/\frakz \to G/Z$ restricted to $d\pi(\omega) + \fraku/\frakz$ is a homeomorphism onto its image. We may also assume that $\omega$ is small enough that \cref{BCH consequence} holds for all $r \in \frakr \cap \omega$. We are left to prove that $\exp \colon \omega + \fraku \to \exp(\omega)U$ is a homeomorphism.

Let $v_1, v_2 \in \omega + \fraku$ be such that $\exp(v_1) = \exp(v_2)$. Then $d\pi(v_1) = d\pi(v_2)$, so there exists $z \in \frakz$ such that $v_2 = v_1 + z$. We write $v_1 = r + u$, for $r \in \frakr \cap \omega$. Hence $v_2 = r + u + z$, and by \cref{BCH consequence} we have $\exp(v_2) = \exp(v_1) \varphi_r(z)$. Because $\varphi_{r}$ is injective, $z = 0$, that is $v_1 = v_2$.

Let $g \in \exp(\omega)U$. By induction there exists $v = r + u \in (\frakr \cap \omega) + \fraku$ such that $\pi(g) = \exp(d\pi(v))$. So there exists $z \in Z$ such that $g = \exp(v)z$. Choose $z' \in \frakz$ such that $\varphi_r(z') = z$. Then
\[g = \exp(r+u)\varphi_r(z) = \exp(r+u+z) \in \exp(\omega + \fraku).\qedhere\]
\end{proof}

\section{Proof of the main theorem}
\label{s:proof}

In this section we prove the main theorem in its various versions (\cref{main theorem}, \cref{main theorem strong}, \cref{h2b} and \cref{no compact factors}). Let $f \colon \Gamma \to G$ be a quasihomomorphism, where $G$ is a real algebraic group. Let $D = D_f$ denote the defect set, and $\Delta = \Delta_f$ the defect group, i.e. the closure of the group generated by $D$.

\subsection{Setup}

\begin{lemma}
\label{connected}
    Up to bounded modification, $G$ is Zariski-connected, and $\langle f(\Gamma) \rangle$ is Zariski-dense.
\end{lemma}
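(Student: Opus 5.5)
The plan is to first make the Zariski-closure of $\langle f(\Gamma)\rangle$ the new target, and then pass to a finite-index subgroup of $\Gamma$ to make it Zariski-connected.

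\emph{Step 1: shrink the target.} Set $H \coloneqq \zar{\langle f(\Gamma)\rangle}$. This is an algebraic subgroup of $G$ containing $f(\Gamma)$. Since $H$ is closed, $f$ viewed as a map $\Gamma \to H$ is a quasihomomorphism: its defect set is the same $D_f$, which lies in $H$ and is bounded there. So $f \colon \Gamma \to H$ is a bounded modification of $f$, with $\Gamma' = \Gamma$ and the identity as the closeness witness. After this step $\langle f(\Gamma)\rangle$ is Zariski-dense in $G = H$.

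\emph{Step 2: make the target connected.} Let $H^\circ$ be the Zariski identity component, a normal algebraic subgroup of finite index. Let $\pi \colon H \to H/H^\circ$ be the projection to the finite group $Q$. Then $\pi \circ f$ is a quasihomomorphism to a finite discrete group, with defect contained in the finite set $\pi(D_f)$.

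The key point is to find a finite-index $\Gamma' < \Gamma$ with $f(\Gamma') \subset H^\circ D'$ for some bounded $D'$. Then one can modify $f$ on $\Gamma'$ by a bounded amount so that it lands in $H^\circ$. To do this:
\begin{itemize}
\item[(a)] Consider $\Gamma_0 = \{\gamma : \pi f(\gamma) \in \pi(\langle D_f\rangle)\}$. This is cleaner if we first quotient further by the normal subgroup $N$ of $Q$ generated by $\pi(D_f)$. Note $N$ is normal in $Q$ by \cref{FK conjugacy} and Zariski-density of $\langle f(\Gamma)\rangle$.
\item[(b)] Then $\Gamma \to Q/N$, $\gamma \mapsto \pi f(\gamma)N$, is a genuine homomorphism. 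Its kernel $\Gamma_1$ has finite index.
\item[(c)] For $\gamma \in \Gamma_1$, we have $f(\gamma) \in H^\circ \cdot n_\gamma$ with $n_\gamma$ ranging over lifts of the finite set $N$. Choosing finitely many representatives $h_n \in \langle D_f \rangle H^\circ$ for $n \in N$, set $f'(\gamma) \coloneqq f(\gamma) h_{n_\gamma}^{-1}$, which lies in $H^\circ$.
\end{itemize}
Here $f'$ is close to $f|_{\Gamma_1}$, since it differs by elements of a finite set.

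\emph{Main obstacle.} Closeness alone does not imply $f'$ is a quasihomomorphism, by \cref{ex bounded distance}. So one must check the defect of $f'$ directly. We have
\[f'(x)^{-1}... = h_{n_y} f(y)^{-1} h_{n_x} f(x)^{-1} f(xy) h_{n_{xy}}^{-1}.\]
To show this is bounded, I would conjugate the finite set $\{h_n\}$ past $f(y)$, and for that I need the $h_n$ to have bounded $f(\Gamma)$-conjugates. This is why I choose the $h_n$ inside $\langle D_f\rangle$, which is possible when $N \subset \pi(\langle D_f\rangle)$, as it is by construction. Then \cref{BOL D} gives that each $h_n^{f(\Gamma)}$ is bounded, and the defect of $f'$ is contained in a product of finitely many bounded sets. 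So $f' \colon \Gamma_1 \to H^\circ \cdot \langle D_f\rangle$ is a quasihomomorphism, but with values in $H^\circ$ only after this correction.

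If the lifts $h_n$ cannot be chosen in $H^\circ$-compatible way, I would instead take $G' \coloneqq \zar{H^\circ\langle D_f\rangle}$ and redo Step 1. Since the component group strictly decreases, iterating terminates. It terminates either when $\pi(D_f)$ is trivial, in which case $f(\Gamma_1) \subset H^\circ$ directly, or when the new closure is connected.

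Finally, I would re-apply Step 1 to $f'$ so that $\langle f'(\Gamma')\rangle$ is Zariski-dense in the new connected target. A Zariski-closed subgroup of finite index in a connected group is the whole group, so the Zariski-closure of $\langle f'(\Gamma')\rangle$ remains connected, and the induction closes. The composition of bounded modifications is a bounded modification, which gives the statement.
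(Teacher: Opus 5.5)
Your Step~2 is correct, and it takes a different route from the paper. The paper cites a lemma from the approximate-lattice literature to find $i,m$ with $(X_iX_i^{-1})^m$ a finite-index subgroup, where $X_i$ is the preimage of a Zariski component, and then corrects $f$ using \cref{long product}. You instead push $f$ to the finite component group $Q$ and note that $N=\pi(\langle D_f\rangle)$ is normal, by \cref{FK conjugacy} and because a Zariski-dense subgroup meets every component. You then pass to the kernel of the homomorphism $\Gamma\to Q/N$, correct by lifts $h_n\in\langle D_f\rangle$, and bound the new defect with \cref{BOL D}. This route is self-contained and makes the finite-index subgroup explicit. Your fallback paragraph is unnecessary: lifts $h_n\in\langle D_f\rangle$ always exist, since $N=\pi(\langle D_f\rangle)$ by construction.

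The gap is in your last paragraph. After Step~2 the target $H^\circ$ is connected, but $\langle f'(\Gamma_1)\rangle$ need not be Zariski-dense in it. Its closure $\zar{\langle f'(\Gamma_1)\rangle}$ is just some Zariski-closed subgroup of $H^\circ$, with no reason to have finite index. So the fact that a finite-index closed subgroup of a connected group is the whole group says nothing here, and your claim that the new closure ``remains connected'' is false.

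For a counterexample, take $G=\mathrm{O}(2)$, $\Gamma=\ZZ$, $s$ a reflection and $R_\theta$ an irrational rotation. Set $f(2k)=(-I)^k$ and $f(2k+1)=sR_{k\theta}$.
\begin{itemize}
\item $f$ is a quasihomomorphism, since the target is compact.
\item $\langle f(\Gamma)\rangle$ contains $s=f(1)$ and $R_\theta=f(1)^{-1}f(3)$, so it is Zariski-dense in $\mathrm{O}(2)$.
\item $\pi\circ f$ is the homomorphism $n\mapsto n \bmod 2$, so $N=1$ and $\Gamma_1=2\ZZ$.
\item With $h_1=1$ you get $f'=f|_{2\ZZ}$, and $\zar{\langle f'(\Gamma_1)\rangle}=\{\pm I\}$, which is not Zariski-connected.
\end{itemize}

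What is missing is a termination argument for alternating Steps~1 and~2. The successive targets $G\supseteq H_1\supseteq H_1^\circ\supseteq H_2\supseteq\cdots$ form a descending chain of algebraic subgroups, which stabilises by \cref{nested intersection}; a dimension count also works. At the stable stage both properties hold simultaneously, and a composition of finitely many bounded modifications is again one. This is exactly how the paper concludes. Your remark that the component group ``strictly decreases'' does not supply this argument.
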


This will be our only bounded modification, until \cref{final modification}. We also remark that this is the only step where we need to pass to a finite-index subgroup of $\Gamma$.

\begin{proof}
    Replacing $G$ by the Zariski-closure of $\langle f(\Gamma)\rangle$ is clearly a bounded modification. Next, we will show that there exists a bounded modification that takes values in the Zariski-connected component of $G$. By \cref{nested intersection}, we can repeat this process until both properties hold at once.
    
    Let $1 \in G_0, G_1, \ldots, G_n$ denote the Zariski-connected components of $G$. Let $X_i = f^{-1}(G_i \cap f(\Gamma)) \subset \Gamma$. By \cite[Lemma 7]{density:approx:lattices}\footnote{The cited lemma is for locally compact second countable groups, but this hypothesis is only used for the compactness of the Chabauty topology on the set of closed subsets, which holds in the discrete case without any cardinality assumptions.}, there exists $i \in \{0, \ldots, n\}$ and $m \geq 1$ such that $(X_i X_i^{-1})^m$ is a finite-index subgroup $\Gamma'$ of $\Gamma$.
    Now, for an element $x \in \Gamma'$ choose an expression $x = y_1 z_1^{-1} \cdots y_m z_m^{-1}$, where $y_j, z_j \in X_i$. Define
    \[f'(x) = f(y_1) f(z_1)^{-1} \cdots f(y_m) f(z_m)^{-1}\]
    and notice that $f(y_j) f(z_j)^{-1} \in G_i G_i^{-1} \subset G_0$, so $f'(\Gamma') \subset G_0$. That $f'$ is a quasihomomorphism close to $f|_{\Gamma'}$ follows from \cref{long product}. Therefore $f'$ is a bounded modification of $f$ that takes values in $G_0$.
\end{proof}

Now $G$ is a Zariski-connected real algebraic group, so it decomposes as $G = (R/F) \ltimes U$, where $U$ is unipotent, $R$ is reductive, and $F$ is a finite subgroup of $R$. The action of $R/F$ on $U$ pulls back to an action of $R$, which defines a semidirect product $R \ltimes U$, which surjects on $H$ with kernel $F$. By \cref{compact extension}, we can therefore assume that
\[G = R \ltimes U \quad \text{with }R \text{ connected reductive and } U \text{ unipotent}.\]

We now decompose the reductive part as
\[R = G_b \times G_h \times T,\]
where
\begin{itemize}
\item $T$ is a torus;
\item $G_b$ is the product of all those simple factors $S$ such that the composition $\Gamma \xrightarrow{f} G \to S$ is bounded but does not have central defect;
\item $G_h$ is the product of all those simple factors $S$ such that the composition $\Gamma \xrightarrow{f} G \to S$ has central defect.
\end{itemize}

By \cref{simple case}, every simple factor is indeed either in $G_b$ or in $G_h$. We also note that the projection of $\langle D \rangle$ to $G_h$ is trivial, hence so is the projection of $\Delta$ to $G_h$.

\subsection{The conjugacy action on \texorpdfstring{$\Delta$}{Δ}}

Recall from \cref{BOL D} that for all $x \in \langle D \rangle$, the orbit $x^{f(\Gamma)}$ is bounded. It is a priori not clear that this extends to $x \in \Delta$, or to $x \in \zar{\Delta}$.  Note that this is a feature unique to our setting and does not occur when the target is discrete, as in \cite{fujiwarakapovich}. Our next goal is to tackle $\Delta$.

\begin{lemma}
\label{BOL Delta}

For every identity neighbourhood $W \subset \Delta$, there exists an identity neighbourhood $W_0 \subset W$ such that $W_0^{f(\Gamma)} \subset W$. Moreover, for every $x \in \Delta$, the set $x^{f(\Gamma)}$ is bounded.
\end{lemma}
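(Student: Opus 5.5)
We plan to exploit the fact that $\Delta$ is a closed subgroup of the Lie group $G$, hence itself a Lie group, together with \cref{BOL D}, which says that conjugation by $f(\Gamma)$ moves elements of $\langle D\rangle$ within a bounded set. The key observation is that $\langle D\rangle$ is dense in $\Delta$ and is generated by the bounded set $D$. First we settle the second assertion given the first. Fix a compact identity neighbourhood $W\subset\Delta$ and let $W_0\subset W$ be as in the first assertion. Since $\langle D\rangle$ is dense in $\Delta$, any $x\in\Delta$ can be written as $x = yw$ with $y\in\langle D\rangle$ and $w\in W_0$; take $y\in xW_0^{-1}\cap\langle D\rangle$, which is nonempty because $xW_0^{-1}$ has nonempty interior in $\Delta$. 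Then
\[x^{f(\Gamma)}\subset y^{f(\Gamma)}\,W_0^{f(\Gamma)}\subset y^{f(\Gamma)}W,\]
and this is bounded by \cref{BOL D} and the compactness of $W$.

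For the first assertion, the plan is to produce a single compact neighbourhood $V$ of the identity in $\Delta$ that is invariant under $f(\Gamma)$-conjugation up to bounded error, and then to use the Lie structure to shrink it. Write $E=\bigcup_{x\in\Gamma}D^{f(x)}$. By \cref{FK conjugacy} we have $E\subset D^2D^{-1}$, which is bounded, and $E^{f(\Gamma)}\subset (E\cup E^{-1})^{N}$ for a fixed power $N$. More usefully, for each $k$ the set $B_k=(D\cup D^{-1})^k$ satisfies $B_k^{f(\Gamma)}\subset B_{3k}$. Since $\Delta$ is a $\sigma$-compact locally compact group and $\langle D\rangle=\bigcup_k B_k$ is dense, a Baire category argument applied to the closures $\overline{B_k}$ shows that some $\overline{B_k}$ has nonempty interior in $\Delta$. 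Hence $V:=\overline{B_{2k}}$ is a compact identity neighbourhood in $\Delta$, and its conjugates $V^{f(x)}$ all lie in the fixed compact set $\overline{B_{6k}}$.

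The main obstacle is to pass from ``conjugates stay bounded'' to ``conjugates of small sets stay small''. We plan to argue by contradiction using one-parameter subgroups. Suppose there are $W$ and elements $w_n\to 1$ in $\Delta$ together with $x_n\in\Gamma$ such that $w_n^{f(x_n)}\notin W$. Work in the identity component of $\Delta$ and write $w_n=\exp(X_n)$ with $X_n\to 0$ in $\mathrm{Lie}(\Delta)$. Choose integers $m_n\to\infty$ such that the elements $\exp(jX_n)$, for $0\le j\le m_n$, all lie in $V$, while $m_nX_n$ converges to some $X\neq 0$ within a small ball; this can be arranged by rescaling. The conjugates $\exp(j\,\Ad(f(x_n))X_n)$ then lie in $\overline{B_{6k}}$ for all $j\le m_n$. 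The operators $\Ad(f(x_n))$ restricted to $\mathrm{Lie}(\Delta)$ should therefore be uniformly bounded: an element $Y$ such that $\exp(jY)$ stays in a fixed compact set for $j\le m$ has $\|Y\|\lesssim 1/m$ modulo the compact directions, and this forces $\Ad(f(x_n))X_n\to 0$, contradicting $w_n^{f(x_n)}\notin W$. To make this rigorous, the delicate point is controlling the compact (torus-like) directions, where $\exp$ is not proper. We would first try to handle this by noting that $\Ad(f(\Gamma))$ preserves the compactly generated open subgroup generated by $V$, which gives a bounded set of automorphisms of $\mathrm{Lie}(\Delta)$ via continuity of $g\mapsto \Ad(g)|_{\mathrm{Lie}\,\Delta}$ on the precompact set of automorphisms of $\Delta$ induced by $f(\Gamma)$. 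The needed Arzel\`a--Ascoli-type precompactness would come from the uniform bound $V^{f(x)}\subset\overline{B_{6k}}$.
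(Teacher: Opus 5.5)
\textbf{The ``moreover'' part is fine, but the first assertion is not established: the Baire step is false, and the equicontinuity step is circular.}

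Your derivation of the ``moreover'' part from the first assertion (writing $x=yw$ with $y\in\langle D\rangle$, $w\in W_0$) is exactly the paper's argument. The first failure is the Baire step. Baire's theorem needs $\Delta$ to be \emph{covered} by the closed sets $\overline{B_k}$. But $\bigcup_k\overline{B_k}$ is in general a proper subset of $\Delta=\overline{\bigcup_k B_k}$. For instance, take $G=\mathrm{SO}(2)$, $\Gamma=\ZZ$, and $f(n)$ the rotation by $2\pi\alpha\lfloor n\theta\rfloor$ with $\alpha,\theta$ irrational. Then $D$ consists of the identity and the rotation by $2\pi\alpha$, so every $B_k$ is finite with empty interior, while $\Delta=\mathrm{SO}(2)$. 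More to the point, a compact identity neighbourhood $V\subset\Delta$ with $V^{f(\Gamma)}$ bounded is essentially equivalent to the ``moreover'' statement itself, and that is the nontrivial content of the lemma. Points of $\Delta\setminus\langle D\rangle$ are limits of words of unbounded length in $D$, and \cref{FK conjugacy} gives no uniform control over their conjugates.

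Even granting such a $V$, you do not get from ``conjugates of $V$ are bounded'' to ``conjugates of small sets are small''. Your proposed fix is circular: Arzel\`a--Ascoli precompactness of the induced automorphisms requires equicontinuity, which is exactly the first assertion. The paper derives \cref{fGamma bounded} from \cref{BOL Delta}, not conversely. The compact-direction issue you flag is real, and Lie theory alone cannot settle it, which is all your argument uses. In the Lie group $\mathrm{SL}_2(\ZZ)\ltimes(\RR^2/\ZZ^2)$, any map $A\mapsto(A,c(A))$ on $\Gamma=\mathrm{SL}_2(\ZZ)$ is a quasihomomorphism with defect in the torus. Choosing $c(I)$ to generate a dense subgroup of the torus gives $\Delta=\RR^2/\ZZ^2$. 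Then $f(\Gamma)$ acts on $\Delta$ through all of $\mathrm{SL}_2(\ZZ)$, which is not equicontinuous, even though $V=\Delta$ has bounded conjugates.

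The missing idea is the paper's linearisation, which uses the algebraic structure:
\begin{enumerate}
\item Modulo $\fraku$, the splitting $R=G_b\times G_h\times T$ (resting on \cref{simple case}) shows that $\Ad(f(\Gamma))$ maps $\exp^{-1}(\Delta)\cap(\omega'+\fraku)$ into $\omega+\fraku$.
\item On $\omega+\fraku$, $\exp$ is a homeomorphism by \cref{exp local homeo}. So for $x=\exp(v)\in\langle D\rangle$ near the identity, boundedness of $x^{f(\Gamma)}$ from \cref{BOL D} becomes boundedness of $\Ad(f(\Gamma))v$, i.e.\ $v\in\frakb$.
\item Since $\frakb$ is a linear subspace, hence closed, this passes to limits, giving $\frakd\subset\frakb$.
\item A pointwise bounded family of linear maps on the finite-dimensional space $\frakd$ is uniformly bounded, which yields $W_0$ directly.
\end{enumerate}
This handles both the passage from $\langle D\rangle$ to its closure and the non-properness of $\exp$.
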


Again, the ``moreover'' part holds for $x \in \langle D \rangle$, by \cref{BOL D}. Indeed, choosing $W$ to be a compact identity neighbourhood of $\Delta$, \cref{BOL Delta} gives the same result for the corresponding $W_0$. So $x^{f(\Gamma)}$ is bounded for all $x \in W_0 \langle D \rangle = \Delta$, as claimed. Hence we can focus on the first statement.

\medskip

Using the same notation as in \cref{exp local homeo}, we fix an open neighbourhood $0 \in \omega \subset \frakg$ such that $\exp$ restricted to $\omega + \fraku$ is a homeomorphism onto its image.

\begin{claim}
\label{BOL algebra}
    There exists an open neighbourhood $0 \in \omega' \subset \frakg$ such that
\[\Ad(f(\Gamma)) (\exp^{-1}(\Delta) \cap \omega' + \fraku) \subset \omega + \fraku.\]
\end{claim}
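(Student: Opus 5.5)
The plan is to reduce the claim to a statement about the reductive quotient $R = G/U$, where the adjoint action of $f(\Gamma)$ is controlled by the decomposition $R = G_b \times G_h \times T$.

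\textbf{Step 1: reduction modulo $\fraku$.} Since $\fraku$ is an ideal of $\frakg$, $\Ad(g)$ preserves $\fraku$ for every $g \in G$. Write $\frakg = \frakr \oplus \fraku$ and let $p \colon \frakg \to \frakr$ be the projection with kernel $\fraku$. For $g = r_g u_g$ with $r_g \in R$ and $u_g \in U$, the element $u_g$ acts trivially on $\frakg/\fraku$. Hence
\[\Ad(g)(r + u) \in \Ad(r_g)(r) + \fraku \quad \text{for all } r \in \frakr,\ u \in \fraku.\]
So it suffices to find $\omega'$ with $\Ad(r_{f(\gamma)})\bigl(p(\exp^{-1}(\Delta) \cap \omega')\bigr) \subset \omega \cap \frakr$ for all $\gamma \in \Gamma$. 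We may assume $\omega$ contains a set of the form $(\omega \cap \frakr) + (\omega \cap \fraku)$; then $\omega + \fraku \supset (\omega \cap \frakr) + \fraku$, and this suffices.

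\textbf{Step 2: locating $p(\exp^{-1}(\Delta))$ near $0$.} Let $\pi_R \colon G \to R$ be the quotient map; then $\pi_R \circ \exp = \exp_R \circ p$. The projection of $\Delta$ to $G_h$ is trivial, so $\pi_R(\Delta) \subset G_b \times \{1\} \times T$. Choose $\omega'$ small enough that $\exp_R$ is a homeomorphism from $p(\omega')$ onto an open subset of $R$. For $v \in \exp^{-1}(\Delta) \cap \omega'$ we then get $\exp_R(p(v)) \in G_b \times T$. Since $G_b \times T$ is a closed subgroup with Lie algebra $\frakg_b \oplus \mathfrak{t}$, and since $\exp_R$ is a local homeomorphism carrying a neighbourhood of $0$ in $\frakg_b \oplus \mathfrak{t}$ onto a neighbourhood of $1$ in $G_b \times T$, shrinking $\omega'$ gives $p(v) \in \frakg_b \oplus \mathfrak{t}$.

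\textbf{Step 3: boundedness of the action.} The product is direct, so $\Ad(r_{f(\gamma)})$ acts on $\frakg_b \oplus \mathfrak{t}$ only through the $G_b$-component of $r_{f(\gamma)}$. This action is by $\Ad$ on $\frakg_b$ and trivially on $\mathfrak{t}$, because $T$ is central in $R$. By the definition of $G_b$, the projection of $f(\Gamma)$ to each simple factor of $G_b$ is bounded. Hence the $G_b$-components of the $r_{f(\gamma)}$ lie in a compact set $K_b$, and $\Ad(K_b)$ is a compact set of linear maps. By continuity there is a neighbourhood $\omega''$ of $0$ in $\frakg_b \oplus \mathfrak{t}$ with $\Ad(K_b)\,\omega'' \subset \omega \cap \frakr$. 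Shrinking $\omega'$ further so that $p(\omega') \cap (\frakg_b \oplus \mathfrak{t}) \subset \omega''$ completes the proof.

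The main obstacle is Step 2, which uses only the local injectivity of $\exp$ on $\frakr$ and the triviality of the $G_h$-projection of $\Delta$. One must check that a small $r \in \frakr$ with $\exp(r) \in G_b \times T$ actually lies in $\frakg_b \oplus \mathfrak{t}$. This follows from the product structure, since $\exp_R$ splits along $R = G_b \times G_h \times T$ and $\exp_{G_h}$ is injective near $0$, so the $\frakg_h$-component must vanish. The remaining steps are routine continuity arguments.
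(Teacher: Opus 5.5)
Your proof is correct and follows essentially the paper's argument: work modulo $\fraku$, use the splitting $\frakr = \frakg_b \oplus \frakg_h \oplus \mathfrak{t}$, note that $T$ acts trivially, that small elements of $\exp^{-1}(\Delta)$ have vanishing $\frakg_h$-component, and that the $G_b$-projection of $f(\Gamma)$ is bounded. Your Step~2 uses only local injectivity of $\exp$ near $0$, which is the accurate form of the paper's justification, since a nontrivial central element of $G_h$ can be the exponential of a large vector (e.g.\ $-I \in \mathrm{SL}_2(\RR)$); it also adapts at once if the $G_h$-projection of $\Delta$ is merely finite and central, and because it depends only on $p(v)$ it covers $v \in \exp^{-1}(\Delta) \cap (\omega' + \fraku)$, the reading used in \cref{Delta in b}.
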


\begin{proof}
    Throughout the proof, we work modulo $\fraku$, hence in the Lie algebra $\frakr$ of $R$. Recall that $R$ splits as $G_b \times G_h \times T$, that the projection of $f(\Gamma)$ onto $G_b$ is bounded, the projection of $\Delta$ onto $G_h$ is finite and central, and $T$ is abelian. The Lie algebra $\frakr$ has an $\mathrm{Ad}$-invariant splitting as the direct sum of the Lie algebras of the three factors, and we study the adjoint action coordinate-wise.
    
    Because $T$ is central in $R$, we have that $T$ is in the kernel of the adjoint action. Because $\Delta$ has finite central projection onto $G_h$, the preimage $\exp^{-1}(\Delta)$ has trivial $\frakg_h$ component: indeed, non-trivial central elements of the semi-simple group $G_h$ cannot be in the image of the exponential map.  So the adjoint action of $f(\Gamma)$ on $\exp^{-1}(\Delta)$ reduces to the adjoint action of the projection of $f(\Gamma)$ onto $G_b$, which is bounded by definition. Hence $\Ad(f(\Gamma))$ is a uniformly bounded family of operators of $\exp^{-1}(\Delta)$.  It follows that the desired open neighbourhood $\omega'$ of $0$ exists.
\end{proof}

Define
\[\frakb \coloneqq \{ v \in \frakg : \Ad(f(\Gamma))(v) \text{ is bounded}\}.\]

\begin{claim}
\label{Delta in b}

$\Delta \cap \exp(\omega' + \fraku) \subset \exp(\frakb)$.
\end{claim}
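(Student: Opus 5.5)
The plan is to exploit the equivariance of $\exp$ under conjugation, together with the injectivity of $\exp$ on $\omega + \fraku$ from \cref{exp local homeo}. Let $x \in \Delta \cap \exp(\omega' + \fraku)$, and write $x = \exp(v)$ with $v \in \omega' + \fraku$; then $v \in \exp^{-1}(\Delta) \cap (\omega' + \fraku)$. For each $\gamma \in \Gamma$ we have
\[x^{f(\gamma)} = \exp\bigl(\Ad(f(\gamma)^{-1}) v\bigr),\]
and by \cref{BOL algebra}, $\Ad(f(\gamma)^{-1})v \in \omega + \fraku$. The inverse here is harmless: either we adjust conventions so that \cref{BOL algebra} is applied to the right family, or we note that its proof bounds the family $\Ad(f(\Gamma))$ acting on $\exp^{-1}(\Delta)$ uniformly, and the same argument works for inverses, since the projection of $f(\Gamma)$ to $G_b$ is bounded.

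The key step is then as follows. By the ``moreover'' part of \cref{BOL Delta}, or directly by \cref{BOL D} when $x \in \langle D\rangle$, the orbit $x^{f(\Gamma)}$ is contained in a compact set $Q \subset G$. Caution is needed here, because the moreover part of \cref{BOL Delta} was deduced from its first statement, which presumably is proved using this claim. To avoid circularity, I would argue only with $x \in \langle D\rangle$ first, and then pass to closures. Since $\exp|_{\omega+\fraku}$ is a homeomorphism onto its image, I would like to conclude that $\Ad(f(\Gamma)^{-1})v$ lies in $\exp|_{\omega+\fraku}^{-1}(Q)$. That set is bounded provided its closure stays inside $\omega + \fraku$, so some care is needed: I would shrink $\omega$ to a relatively compact $\omega_1$ with $\overline{\omega_1} \subset \omega$, apply \cref{BOL algebra} with target $\omega_1$, and then note that $\exp$ is a homeomorphism from $\overline{\omega_1} + \fraku$ (a closed set) onto its image. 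The preimage of the compact set $Q \cap \exp(\overline{\omega_1}+\fraku)$ is then closed. It is also bounded: in the $\frakr$-direction it is bounded by $\overline{\omega_1}$, and in the $\fraku$-direction a compact subset of $U$ has bounded $\exp$-preimage, using the semidirect product structure $\exp(r+u) = \exp(r)\cdot(\text{element of } U)$ depending properly on $u$. Hence $\Ad(f(\Gamma))v$ is bounded, so $v \in \frakb$ and $x \in \exp(\frakb)$.

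For the passage from $\langle D\rangle$ to $\Delta$, note that $\frakb$ is a linear subspace, since a sum of bounded orbits is bounded. Being a subspace, it is closed. Given $x \in \Delta \cap \exp(\omega'+\fraku)$, approximate $x$ by $x_n \in \langle D\rangle \cap \exp(\omega'+\fraku)$, which is possible since $\exp(\omega'+\fraku)$ is open. The corresponding logarithms $v_n \in \frakb$ then converge to $v$, and so $v \in \frakb$.

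I expect the main obstacle to be the properness step: showing that a bounded set in $G$ has bounded preimage under $\exp$ restricted to $\overline{\omega_1} + \fraku$, uniformly in the $\fraku$-direction. This is also the step where the uniform bound on $\Ad(f(\Gamma))$ from \cref{BOL algebra} must be combined correctly with the orbit boundedness from \cref{BOL D}.
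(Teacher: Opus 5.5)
Your proposal is correct and follows the paper's proof. You first treat $x\in\langle D\rangle$ using \cref{BOL algebra}, the bounded orbit from \cref{BOL D}, and the fact that $\exp$ is a homeomorphism on $\omega+\fraku$; you then pass to $\Delta$ using continuity of the inverse of $\exp$ and the fact that the subspace $\frakb$ is closed. Your extra care about the $\Ad(f(\gamma)^{-1})$ convention and about shrinking $\omega$ to get compact preimages fills in points the paper leaves implicit, and the properness you worry about holds because $\exp$ maps $\omega+\fraku$ homeomorphically onto $\exp(\omega)U$, which is the full preimage of an open subset of $R$.
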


\begin{proof}
	Suppose first that $x \in \langle D \rangle \cap \exp(\omega' + \fraku)$. Let $v \in \omega' + \fraku$ be such that $\exp(v) = x$. Then
    \[x^{f(\Gamma)} = \exp(\Ad(f(\Gamma))(v)).\]
    By \cref{BOL algebra}:
    \[\Ad(f(\Gamma))(v) \subset \omega + \fraku.\]
    Because $\exp|_{\omega + \fraku}$ is a homeomorphism onto its image, and $x^{f(\Gamma)}$ is bounded by \cref{BOL D}, $\Ad(f(\Gamma))(v)$ must be bounded as well, that is $v \in \frakb$.

    Now for the general case, suppose that $x \in \Delta \cap \exp(\omega' + \fraku)$, and let $x_n \in \langle D \rangle$ be such that $x_n \to x$. Up to discarding finitely many terms, we may assume that $x_n \in \exp(\omega' + \fraku)$, so by the previous case we can choose $v_n \in (\omega' + \fraku) \cap \frakb$ such that $\exp(v_n) = x_n$. Because $\exp$ is a homeomorphism on $\omega' + \fraku$, we have that $v_n$ converges to some $v \in \omega' + \fraku$ such that $\exp(v) = x$. Finally, since $\frakb$ is a vector subspace of $\frakg$ and is in particular closed, we have that $v_n \in \frakb$ implies $v \in \frakb$.
\end{proof}

\begin{proof}[Proof of \cref{BOL Delta}]
	Let $\frakd$ denote the Lie algebra of $\Delta$. \cref{Delta in b} implies that $\frakd \subset \frakb$. Let $W$ be an identity neighbourhood of $\Delta$, so $\exp^{-1}(W)$ is an open subset of $\frakd$. Because $\Ad(f(\Gamma))$ is a bounded family of operators of $\frakb$, hence of $\frakd$, there exists an open subset $\omega_0$ of $\frakd$ such that $\Ad(f(\Gamma))(\omega_0) \subset \exp^{-1}(W)$. Taking $W_0 = \exp(\omega_0)$, we have
	\[W_0^{f(\Gamma)} = \exp(\Ad(f(\Gamma))(\omega_0) \subset W. \qedhere\]
\end{proof}

\subsection{The conjugacy action on \texorpdfstring{$\zar{\Delta}$}{Δzar}}

In this subsection we strengthen and extend \Cref{BOL Delta} to apply to the Zariski-closure $\zar{\Delta}$.

\begin{prop}
\label{BOL}
    There exists a compact set $K \subset G$ such that $f(\Gamma) \subset Z_G(\zar{\Delta})K$. In particular, for all $x \in \zar{\Delta}$, the set $x^{f(\Gamma)}$ is bounded.
\end{prop}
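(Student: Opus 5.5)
We want a compact $K$ with $f(\Gamma) \subset Z_G(\zar{\Delta})K$. The plan is to reduce to the action of $G$ on $\zar{\Delta}$ by conjugation, which is an algebraic action since $\zar{\Delta}$ is normal in $G$ by \cref{defect group normal}. Let $\rho \colon G \to \Aut(\zar{\Delta})$ be this action, or better the induced representation $\Ad$ on the Lie algebra of $\zar{\Delta}$ together with the action on its finitely many components. The kernel of the conjugation action is $Z_G(\zar{\Delta})$, and $G/Z_G(\zar{\Delta})$ embeds (as a real algebraic group) into $\Aut(\zar{\Delta})$, with proper orbit map onto the image, which is closed. So it suffices to show that $\rho(f(\Gamma))$ is bounded in $\Aut(\zar{\Delta})$. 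Then its preimage in $G$ is contained in $Z_G(\zar{\Delta})K$ for a compact $K$, because the quotient map $G \to G/Z_G(\zar{\Delta})$ is proper onto its closed image when we restrict to lifting compact sets (compact sets in a quotient of a Lie group by a closed subgroup lift to compact sets).

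Next I show that $\rho(f(\Gamma))$ is bounded. By \cref{BOL Delta}, $f(\Gamma)$ acts on $\Delta$ by a family of automorphisms that is equicontinuous at the identity. The cleanest approach is through the Lie algebra $\frakd$ of $\Delta$: the proof of \cref{BOL Delta} shows $\frakd \subset \frakb$, so $\Ad(f(\Gamma))|_{\frakd}$ is a bounded family of linear maps. Moreover, by \cref{BOL Delta}, every $x \in \Delta$ has bounded orbit $x^{f(\Gamma)}$. I now pass to the Zariski closure. The set
\[ \{ g \in G : \Ad(g) \text{ preserves } \frakd \}\]
is algebraic, and $f(\Gamma)$ lies in it. So the linear span, and the Lie algebra generated, behave well. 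The key point is that the Lie algebra of $\zar{\Delta}$ is generated, as an algebraic Lie algebra, by $\frakd$ together with the (finitely many, by \cref{centralisers}-type Noetherian arguments) elements of $\Delta$ needed. Concretely, I choose by \cref{centralisers} a finite set $x_1,\dots,x_k \in \Delta$ with $Z_G(\Delta) = Z_G(\{x_1,\dots,x_k\})$. Then $Z_G(\zar{\Delta}) = Z_G(\Delta) = Z_G(\{x_1,\ldots,x_k\})$, since centralising is a Zariski-closed condition. Hence the orbit map
\[ G/Z_G(\zar{\Delta}) \to G^k,\qquad g \mapsto (x_1^g,\dots,x_k^g)\]
is an injective algebraic map. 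Its image is an orbit of an algebraic action, hence locally closed, and by standard facts (orbits of real algebraic actions are homeomorphic to the quotient) it is a homeomorphism onto its image.

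Since each $x_i^{f(\Gamma)}$ is bounded by \cref{BOL Delta}, the image of $f(\Gamma)$ in $G/Z_G(\zar{\Delta})$ maps into a bounded subset of the orbit. The \textbf{main obstacle} is to ensure that boundedness in $G^k$ pulls back to boundedness in $G/Z_G(\zar{\Delta})$. The risk is that the orbit is not closed and a bounded sequence could escape to the boundary. To handle this, I would use that $x_i^{f(\gamma)}$ stays not merely bounded but controlled: by \cref{BOL Delta} the conjugates stay in $\Delta$, which is a closed subgroup on which $f(\Gamma)$ acts equicontinuously. So limits of $\rho(f(\gamma_n))$ on $\Delta$ exist, by Arzel\`a--Ascoli on the generating compact set, and are again injective homomorphisms. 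This is the case because the inverses $f(\gamma)^{-1}$ also act equicontinuously, using $f(\gamma)^{-1} \approx f(\gamma^{-1})$ modulo a bounded set that, by \cref{BOL D}, acts boundedly. Thus limit points are automorphisms, and the corresponding tuples lie in the orbit, since orbits of the real algebraic group $G$ acting on the algebraic closure are closed in their closure of top dimension. I would argue this via the Lie algebra by taking $x_i$ close to the identity, where $x_i = \exp(v_i)$ with $v_i \in \frakd \subset \frakb$. The problem then reduces to the bounded linear family $\Ad(f(\Gamma))$ on $\frakd$, whose limit points are invertible. Both $\Ad(f(\gamma))$ and its inverse are bounded, so a closed bounded set of invertible operators is compact, and its preimage gives $K$. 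The ``in particular'' statement follows immediately: $x^{f(\Gamma)} \subset x^{K}$ for $x \in \zar{\Delta}$.
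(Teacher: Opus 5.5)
You share the paper's setup: pick $x_1,\dots,x_k\in\Delta$ with $Z_G(\{x_1,\dots,x_k\})=Z_G(\zar{\Delta})$ via \cref{centralisers}, use that the diagonal conjugation orbit $O\subset G^k$ is locally closed with the orbit map a homeomorphism, and get relative compactness of the conjugation action of $f(\Gamma)$ on $\Delta$ from Arzel\`a--Ascoli and \cref{BOL Delta}. You also isolate the obstacle correctly. But neither of your two ways past it works, so there is a genuine gap.

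First, the claim that limit tuples lie in $O$ ``since orbits \dots\ are closed in their closure of top dimension'' is not an argument. Local closedness only says $O$ is open in $\overline{O}$; it gives no reason why a particular limit of points of $O$ avoids $\overline{O}\setminus O$. Knowing that the limit map $\phi$ is an automorphism of $\Delta$ does not show that $\phi$ is induced by an element of $G$.

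Second, the Lie algebra fallback loses exactly the information you need. The kernel of $g\mapsto\Ad(g)|_{\frakd}$ is the centraliser of the identity component $\Delta^\circ$, which can be strictly larger than $Z_G(\Delta)$ when $\Delta$ is disconnected; if $\Delta$ is discrete, then $\frakd=0$ and you learn nothing. You also cannot take the $x_i$ near the identity, because such elements lie in $\Delta^\circ$ and their centraliser contains $Z_G(\Delta^\circ)$. So this route gives at best $f(\Gamma)\subset Z_G(\Delta^\circ)K$. The same kernel problem affects the ``$\Ad$ plus action on components'' alternative in your first paragraph.

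The missing idea is to use local closedness only at the base point $(x_1,\dots,x_k)$, and to use the group structure of $\Aut(\Delta)$ to translate back to it. Here $\Aut(\Delta)$ is a Lie group by Hochschild's theorem, since $\Delta$ is compactly generated. Local closedness gives an identity neighbourhood $W$ with $O\cap(x_1W\times\cdots\times x_kW)$ compact in $O$. Hence $\{g\in G: x_j^g\in x_jW \text{ for all } j\}\subset Z_G(\zar{\Delta})K$ for some compact $K$. Let $\mathcal{N}$ be the identity neighbourhood of automorphisms moving each $x_j$ into $x_jW$. Relative compactness of $\alpha(f(\Gamma))$ gives finitely many $\gamma_i$ with $\alpha(f(\Gamma))\subset\bigcup_i\mathcal{N}\alpha(f(\gamma_i))$. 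Then $f(\gamma)f(\gamma_i)^{-1}\in Z_G(\zar{\Delta})K$ for a suitable $i$, so $f(\Gamma)\subset Z_G(\zar{\Delta})K\{f(\gamma_1),\dots,f(\gamma_n)\}$.

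In your sequential language: if $\alpha(f(\gamma_n))\to\phi$ in the topological group $\Aut(\Delta)$, then $\alpha(f(\gamma_n)f(\gamma_m)^{-1})\to\mathrm{id}$ as $n,m\to\infty$. This places $f(\gamma_n)f(\gamma_m)^{-1}$ in $Z_G(\zar{\Delta})K$ for large $n,m$. That translation argument, not a dimension count, is what shows the limit tuple lies in $O$.
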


By definition $\Delta = \overline{\langle D \rangle}$ is the closure of a group generated by a bounded set and is hence compactly generated. So we can apply \cite{hochschild} to see that the group of automorphisms $\Aut(\Delta)$ with the compact open topology is a Lie group. A set of the form
\[\mathcal{N}(C, W) \coloneqq \{ \varphi \in \Aut(\Delta) : \varphi(x) \in xW \text{ for all } x \in K \},\]
for $C$ a compact set and $W$ an open identity neighbourhood, is an identity neighbourhood.

We now consider the map
\[\alpha \colon N_G(\Delta) \to \Aut(\Delta)\]
given by the conjugacy action.

\begin{claim}
\label{fGamma bounded}
    The set $\alpha(f(\Gamma))$ is bounded in $\Aut(\Delta)$.
\end{claim}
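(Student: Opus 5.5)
To show that $\alpha(f(\Gamma))$ is bounded in $\Aut(\Delta)$, I would use the description of the compact-open topology through the neighbourhoods $\mathcal{N}(C,W)$. First note that $f(\Gamma)$ does lie in $N_G(\Delta)$: by \cref{FK conjugacy}, conjugation by $f(x)$ maps $\langle D\rangle$ into itself, and by continuity it maps $\Delta$ into itself. Since $\Aut(\Delta)$ is a Lie group, in particular locally compact, a subset is relatively compact if and only if it is equicontinuous at the identity and has bounded orbits. Concretely, I would verify the criterion of Arzelà--Ascoli type: a family $\mathcal{F}\subset\Aut(\Delta)$ is relatively compact in the compact-open topology (in the version with inverses, i.e. the Braconnier/Arens topology) if both $\mathcal{F}$ and $\mathcal{F}^{-1}$ are equicontinuous and pointwise relatively compact.

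Both ingredients come from \cref{BOL Delta}. Equicontinuity at the identity is exactly the first statement: for every identity neighbourhood $W\subset\Delta$ there is $W_0\subset W$ with $W_0^{f(\Gamma)}\subset W$. Since the maps $\alpha(f(x))$ are group automorphisms, equicontinuity at the identity propagates to equicontinuity everywhere. Pointwise boundedness is the ``moreover'' part: $x^{f(\Gamma)}$ is bounded for every $x\in\Delta$. By Arzelà--Ascoli, the closure of $\alpha(f(\Gamma))$ in the space of continuous maps $\Delta\to\Delta$ with the compact-open topology is compact. To handle inverses, note that $f(x)^{-1}\in f(x^{-1})D^{2}$ (\cref{long product}), and that $D\subset\Delta$. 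Hence $\alpha(f(x)^{-1})$ is $\alpha(f(x^{-1}))$ composed with conjugation by an element of the compact set $\overline{D^2}\subset\Delta$. The family of inner automorphisms of $\Delta$ by elements of a compact subset is itself compact, so the inverse family is also equicontinuous and pointwise bounded.

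The main obstacle is the topological bookkeeping: I need to pass from compactness in $C(\Delta,\Delta)$ to compactness in $\Aut(\Delta)$. To do this, I would show that any limit of a net $\alpha(f(x_i))$ is an automorphism, using the matching convergent subnet of the inverses. The limit is then a bijective continuous homomorphism whose inverse is also continuous. Equivalently, I can invoke the standard fact that for a locally compact group, an equicontinuous, pointwise bounded subset of $\Aut$ whose set of inverses has the same properties is relatively compact. If necessary, I would use the bound on inverses from \cref{long product} to avoid that general fact and argue directly.
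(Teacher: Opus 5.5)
Your proof is correct and follows the paper's argument: Arzel\`a--Ascoli, with pointwise boundedness and equicontinuity both supplied by \cref{BOL Delta}. Your extra control of inverses via $f(x)^{-1}\in f(x^{-1})D^2$ improves on the paper's two-line proof, which omits it. It is needed because equicontinuity and pointwise boundedness alone only give relative compactness in $C(\Delta,\Delta)$. For instance, the automorphisms $t\mapsto t/n$ of $\RR$ converge to $0$, which is not an automorphism, so your opening ``if and only if'' is false as stated. The same relation also gives the reverse inclusion $\Delta\subset\Delta^{f(x)}$, which you need for $f(\Gamma)\subset N_G(\Delta)$ and which ``maps $\Delta$ into itself'' does not provide.
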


\begin{proof}
    We verify this using the Arz{\'e}la--Ascoli Theorem. First, for every $x \in \Delta$, the second part of \cref{BOL Delta} gives that $\alpha(f(\Gamma))(x) = x^{f(\Gamma)}$ is bounded. Moreover, $\alpha(f(\Gamma))$ is equicontinuous: because this is a group topology, it suffices to verify this locally, which is given by the first part of \cref{BOL Delta}.
\end{proof}

\begin{claim}
\label{zimmer consequence}
    There exists a finite subset $F \subset \Delta$, an open identity neighbourhood $W \subset \Delta$, and a compact set $K \subset G$ such that
    \[\alpha^{-1}(\mathcal{N}(F, W)) \subset Z_G(\zar{\Delta})K.\]
\end{claim}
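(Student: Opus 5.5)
We want a finite set $F\subset\Delta$, an identity neighbourhood $W\subset\Delta$ and a compact $K\subset G$ with $\alpha^{-1}(\mathcal N(F,W))\subset Z_G(\zar{\Delta})K$. The idea is that elements of $N_G(\Delta)$ acting on $\Delta$ almost trivially are, up to a compact error, centralising elements of $\zar{\Delta}$. We argue in three steps.

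\textbf{Step 1: finitely many elements.} By \cref{centralisers}, there is a finite subset $F=\{x_1,\dots,x_k\}\subset\Delta$ with $Z_G(\Delta)=Z_G(F)$. Centralising a subgroup is a Zariski-closed condition, so $Z_G(\Delta)=Z_G(\zar{\Delta})$. It therefore suffices to find $W$ and $K$ such that every $g\in G$ with $x_i^g\in x_iW$ for all $i$ lies in $Z_G(F)K$.

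\textbf{Step 2: reduction to an orbit map.} Consider the algebraic action of $G$ on $G^k$ by simultaneous conjugation, and the orbit map
\[
\beta\colon G\to G^k,\qquad g\mapsto (x_1^g,\dots,x_k^g).
\]
This map factors through $Z_G(F)\backslash G$. The orbit $\mathcal O=\beta(G)$ of a real algebraic action is locally closed in $G^k$ (Borel--Harish-Chandra, or Zimmer's result that orbits of real algebraic actions are locally closed). Consequently the orbit map $Z_G(F)\backslash G\to\mathcal O$ is a homeomorphism onto $\mathcal O$ with its subspace topology; this is the standard consequence of local closedness via Baire category and $\sigma$-compactness.

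\textbf{Step 3: choice of $W$ and $K$.} Let $p=(x_1,\dots,x_k)$. Since $\mathcal O$ is locally closed, $p$ has a neighbourhood $V$ in $G^k$ with $\overline{V}\cap\mathcal O$ compact in $\mathcal O$. Choose $W\subset\Delta$ relatively compact and small enough that $\prod x_iW\subset V$. Then $\beta(\alpha^{-1}(\mathcal N(F,W)))$ lies in the compact set $\overline V\cap\mathcal O$. By the homeomorphism of Step 2, the preimage of this compact set in $Z_G(F)\backslash G$ is compact. Lifting it along the quotient map, which is open, yields a compact $K$ with
\[
\alpha^{-1}(\mathcal N(F,W))\subset Z_G(F)K=Z_G(\zar{\Delta})K.
\]

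\textbf{Main obstacle.} The key point is Step 2: that the orbit map is proper onto its locally closed orbit, i.e. a homeomorphism from $Z_G(F)\backslash G$. This is where the real algebraic structure is used; it fails for general Lie group actions, such as irrational flows. I would cite the local closedness of orbits for algebraic actions of real algebraic groups, presumably via Zimmer's book, and deduce the homeomorphism by the open mapping theorem for $\sigma$-compact groups acting transitively on locally compact spaces.
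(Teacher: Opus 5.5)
Your proposal is correct and takes essentially the same route as the paper. Both arguments pick a finite $F$ via \cref{centralisers} together with $Z_G(\Delta)=Z_G(\zar{\Delta})$, and use local closedness of orbits of the simultaneous conjugation action on $G^n$ (Zimmer, Theorem 3.1.3) and the resulting homeomorphism between the coset space of $Z_G(F)$ and the orbit (Zimmer, Theorem 2.1.14). Both then pull back a compact piece of the orbit near $(x_1,\dots,x_n)$ and lift it to a compact $K$. Your version is slightly more careful than the paper's: you use right cosets $Z_G(F)\backslash G$, which matches the right-action convention and the form $Z_G(F)K$, and you justify why a neighbourhood with compact trace on the orbit exists.
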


\begin{proof}
    Because centralising is an algebraic condition, $Z_G(\Delta) = Z_G(\zar{\Delta})$. By \cref{centralisers} we can choose $F = \{x_1, \ldots, x_n\} \subset \Delta$ to be such that $Z_G(F) = Z_G(\Delta)$.

    Consider the action of $G$ on the product $G^n$ by coordinate-wise conjugacy, and let $O$ denote the orbit of $(x_1, \ldots, x_n)$. By \cite[Theorem 3.1.3]{zimmer}, the action has locally closed orbits. In particular, there exists a compact neighbourhood $W$ of $e$ in $G$ such that
    \[O \cap (x_1W, \ldots, x_nW)\]
    is compact in $O$. Moreover, by \cite[Theorem 2.1.14]{zimmer} the orbit map
    \[G/Z_G(F) \to O\]
    is a homeomorphism. Therefore the preimage of $O \cap (x_1W, \ldots, x_nW)$ is compact in $G/Z_G(F)$, and we can write its lift to $G$ as $Z_G(F) K$, for a compact subset $K \subset G$.
    
    Now if $g \in \alpha^{-1}(\mathcal{N}(F, W))$, then for all $i = 1, \ldots, n$, we have
    \[x_i^g = \alpha(g)(x_i) \in x_iW,\]
    that is $g$ lies in the preimage of $O \cap (x_1W, \ldots, x_nW)$ under the orbit map, and so it belongs to $Z_G(F) K = Z_G(\zar{\Delta}) K$ by the above.
\end{proof}

\begin{proof}[Proof of \cref{BOL}]
    Let $F, W$ be as in \cref{zimmer consequence}. By \cref{fGamma bounded}, there exist finitely many elements $\gamma_1, \ldots, \gamma_n \in \Gamma$ such that
    \[\alpha(f(\Gamma)) \subset \bigcup_i \mathcal{N}(F, W) \alpha(f(\gamma_i),\]
    hence
    \[f(\Gamma) \subset \bigcup_i \alpha^{-1}(\mathcal{N}(F, W)) f(\gamma_i).\]
    By \cref{zimmer consequence}, we therefore have
    \[f(\Gamma) \subset Z_H(\zar{\Delta}) K \{f(\gamma_1), \ldots, f(\gamma_n)\}.\]
    Since $K \{f(\gamma_1), \ldots, f(\gamma_n)\}$ is still compact, we conclude.
\end{proof}

\subsection{Endgame}
\label{endgame}

By \cref{BOL}, there exists a compact set $K$ such that $f(\Gamma) \subset Z_G(\zar{\Delta}) K.$ Define $N \coloneqq \zar{\Delta} Z_G(\zar{\Delta})$, which is an algebraic normal subgroup of $G$ containing the defect. Therefore the composition
\[\Gamma \xrightarrow{f} G \to G/N\]
is a homomorphism. Its image is a precompact Zariski-dense subgroup of the algebraic group $G/N$, which implies that $G/N$ itself is compact\footnote{This is the only step where we use the hypothesis that $G$ is a \emph{real} algebraic group, as opposed to a \emph{$p$-adic} algebraic group.}
In particular $N$ must contain the unipotent $U$, and so we can write
\[G = BN,\]
for $B$ a compact algebraic subgroup of $G$ contained in $R$.

Now for each $\gamma \in \Gamma$ we can choose a decomposition of the form
\[f(\gamma) = b(\gamma) z(\gamma) d(\gamma) \in B \cdot Z_G(\zar{\Delta}) \cdot \zar{\Delta}.\]
Because $f(\Gamma) \subset Z_G(\zar{\Delta}) K$, and $Z_G(\zar{\Delta})$ is normal, we may choose this expression so that $d(\Gamma)$ is bounded. Set $f'(\gamma) \coloneqq b(\gamma) z(\gamma) \in B Z_G(\zar{\Delta})$.

\begin{claim}
\label{final modification}
$f' \colon \Gamma \to B Z_G(\zar{\Delta})$ is a bounded modification of $f$. Moreover, $D_{f'} \subset \zar{\Delta}$.
\end{claim}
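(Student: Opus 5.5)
Three things have to be checked: $f'$ is close to $f$, $f'$ is a quasihomomorphism, and its defect lies in $\zar{\Delta}$. Closeness is immediate from the construction. We have $f(\gamma) = f'(\gamma) d(\gamma)$ with $d(\Gamma)$ bounded, so $f(\gamma) \approx_{K'} f'(\gamma)$, where $K'$ is the closure of $d(\Gamma)$. The target $BZ_G(\zar{\Delta})$ is an algebraic subgroup: $Z_G(\zar{\Delta})$ is normal in $G$ (it is the centraliser of a normal subgroup, by \cref{defect group normal}), and $B$ is compact algebraic. Hence $f'$ is a bounded modification as soon as we know it is a quasihomomorphism. That will follow from the second assertion, because $f'(y)^{-1}f'(x)^{-1}f'(xy)$ will be written as a product of boundedly many factors drawn from bounded sets.

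For the defect computation, write $f(x) = f'(x)d(x)$ and substitute into $f(xy) = f(x)f(y)\delta$ with $\delta \in D$. This gives
\[
f'(xy)\,d(xy) = f'(x)\,d(x)\,f'(y)\,d(y)\,\delta,
\]
and hence
\[
f'(y)^{-1}f'(x)^{-1}f'(xy) = d(x)^{f'(y)}\, d(y)\,\delta\, d(xy)^{-1}.
\]
Every factor on the right lies in $\zar{\Delta}$. Indeed $\zar{\Delta}$ is normal in $G$, the elements $d(\cdot)$ lie in $\zar{\Delta}$, and $\delta \in D \subset \Delta$. This proves $D_{f'} \subset \zar{\Delta}$.

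The main obstacle is boundedness, and specifically of the conjugated term $d(x)^{f'(y)}$. Write $f'(y) = b(y)z(y)$. Since $z(y)$ centralises $\zar{\Delta}$, we get $d(x)^{f'(y)} = d(x)^{b(y)}$. The set $d(\Gamma)$ is bounded and $B$ is compact, so conjugates of $d(\Gamma)$ by $B$ stay in a compact set. The remaining factors $d(y)$, $\delta$ and $d(xy)^{-1}$ lie in bounded sets, so $D_{f'}$ is bounded. If instead one wanted to conjugate by $f(y)$ directly, one could use the ``in particular'' part of \cref{BOL}, but that gives only pointwise boundedness. The decomposition through $B$ and $Z_G(\zar{\Delta})$ gives uniformity for free, and it is the reason $f'$ was defined by discarding the $\zar{\Delta}$-component rather than some other one.
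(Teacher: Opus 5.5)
Your proof is correct and follows the paper's argument. The paper runs the same computation as a chain of approximate equalities. It moves $d(\gamma)$ past $b(\delta)$ by conjugation, which stays bounded because $B$ is compact, and past $z(\delta)$ because $Z_G(\zar{\Delta})$ centralises the normal subgroup $\zar{\Delta}$. This is what your identity $f'(y)^{-1}f'(x)^{-1}f'(xy) = d(x)^{b(y)}\,d(y)\,\delta\,d(xy)^{-1}$ packages exactly.
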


\begin{proof}
We have that $f'$ is close to $f$ because $d(\Gamma)$ is bounded. Recall from \cref{notation approx}, that for a bounded set $K$, we write $x \approx_K y$ if $x \in yK$. In the course of the proof, we write $\approx_f$ if $K = D_f$, $\approx_d$ if $K = d(\Gamma)$, and finally $\approx_{B, d}$ if $K = \{ bdb^{-1} : b \in B, d \in d(\Gamma) \}$. Note that each of these bounded sets is contained in $\zar{\Delta}$.

Now we estimate:
\begin{align*}
	f'(\gamma \delta) &\approx_d f(\gamma \delta) \approx_f f(\gamma) f(\delta) \approx_d f(\gamma) f'(\delta) \\
	&= f'(\gamma) d(\gamma) b(\delta) z(\delta) = f'(\gamma) b(\delta) d(\gamma)^{b(\delta)} z(\delta) \\
	&= f'(\gamma) b(\delta) z(\delta) d(\gamma)^{b(\delta)} \approx_{B, d} f'(\gamma) f'(\delta).
\end{align*}
Here we used that $\zar{\Delta}$ is normal; hence $d(\gamma)^{b(\delta)} \subset \zar{\Delta}$ commutes with $z(\delta)$.

Finally, as we remarked above, each compact set appearing in an approximate equality $\approx$ is contained in $\zar{\Delta}$, so $D_{f'} \subset \zar{\Delta}$.
\end{proof}

\begin{proof}[Proof of \cref{main theorem}]
    By \cref{final modification}, we may consider
    \[f' \colon \Gamma \to B Z_G(\zar{\Delta}) = Z_G(\zar{\Delta}) B,\]
    where $B$ is a compact (algebraic) subgroup, and $D' \coloneqq D_{f'}$ is contained in $\zar{\Delta}$. Therefore
    \[(D')^G = (D')^{Z_G(\zar{\Delta})B} = (D')^B \subset \zar{\Delta}.\]
    So $(D')^B$ is a normal bounded subset of $G$ containing the defect $D'$.
\end{proof}

\begin{proof}[Proof of \cref{no compact factors}]
    Suppose that $\langle f(\Gamma) \rangle$ is Zariski-dense in $G$, and $G$ has no non-trivial compact quotient. Then $G$ is automatically Zariski-connected, therefore the bounded modification from \cref{connected} is not needed in this case. Now with the notation above, we have that $B$ is a compact quotient of $G$, so $B$ must be trivial, which implies that the bounded modification from \cref{final modification} takes values in the centraliser of the defect.
\end{proof}

\begin{proof}[Proof of \cref{main theorem strong}]
    As in the proof of \cref{main theorem}, we have a bounded modification $f' \colon \Gamma \to B Z_G(\zar{\Delta})$, where $B$ is a compact (algebraic) subgroup, and $D_{f'}$ is contained in $\zar{\Delta}$. The action of $G'$ by conjugacy on $\zar{\Delta}$ factors through $B$, hence the action of $\zar{\Delta}$ by conjugacy on itself factors through a compact group~$C_0$. This implies that
    \[\zar{\Delta} = Z_{\Delta} \times C_0,\]
    where $Z_{\Delta}$ is the centre of $\zar{\Delta}$, and $C_0$ is a compact group. It follows that there exists a unique maximal compact (algebraic) subgroup $C < \zar{\Delta}$, which is characteristic in $\zar{\Delta}$ hence normal in $G'$. Quotienting by $C$, the image of $\zar{\Delta}$ is a simply connected abelian normal subgroup $A \lhd G'/C$, and the action by conjugacy of $G'$ on $A$ factors through the compact group $B$, so that $A$ is rigid in $G'/C$.
\end{proof}

\begin{proof}[Proof of \cref{h2b}]
    Let us start by noticing that if $H^2_b(\Gamma; \pi)$ vanishes for every finite-dimensional orthogonal representation $\pi$ with no non-trivial invariant vectors, and vanishes when $\pi$ is the trivial representation, then $H^2_b(\Gamma; \pi)$ vanishes for every finite-dimensional orthogonal representation $\pi$ \cite[Lemma 1.2.10]{monod:book}.
    
    Thanks to \cref{main theorem strong} and \cref{compact extension}, we may reduce to the case in which $f \colon \Gamma \to G$ is such that $D_f$ is contained in a rigid abelian algebraic normal subgroup $A \lhd G$, such that $D = D_f$ is contained in $A$. We will show that, under the assumption that bounded cohomology vanishes with coefficients in every finite-dimensional orthogonal representation, there exists a bounded map $b \colon \Gamma \to A$ such that $\gamma \mapsto f(\gamma) b(\gamma)$ is a homomorphism. This will conclude the proof of the second statement, since the map $\gamma \mapsto f(\gamma) b(\gamma)$ is a bounded modification of $f$.

    The composition $\Gamma \xrightarrow{f} G \to G/A$ is a homomorphism, therefore defines an orthogonal representation $\pi$ of $\Gamma$ on $A$. Now consider the lifting problem
    \[\begin{tikzcd}
	& G \\
	\Gamma \\
	& {G/A}
	\arrow[from=1-2, to=3-2]
	\arrow[dashed, from=2-1, to=1-2]
	\arrow[from=2-1, to=3-2]
    \end{tikzcd}\]
    The set-theoretic lift $f \colon \Gamma \to G$ defines the bounded $2$-cocycle
    \[(\gamma_1, \gamma_2) \mapsto f(\gamma_2)^{-1} f(\gamma_1)^{-1} f(\gamma_1 \gamma_2) \in A,\]
    which in turn represents a class in $H^2_b(\Gamma; \pi)$. The stronger vanishing hypothesis gives a bounded primitive $b \colon \Gamma \to A$ for this cocycle, which means exactly that $\gamma \mapsto f(\gamma) b(\gamma)$ is a homomorphism.

    Suppose instead that we only have vanishing for representations with no non-trivial invariant vectors. Then $H^i_b(\Gamma; (\pi^\Gamma)^\perp)$ vanishes for $i = 1$ (this is always true \cite[Proposition 6.2.1]{monod:book}) and $i = 2$, so the long exact sequence in bounded cohomology gives an isomorphism $H^2_b(\Gamma; \pi^\Gamma) \to H^2_b(\Gamma; \pi)$ induced by the  change of coefficients map. That is, every bounded $2$-cocycle is boundedly cohomologous to one that takes values in the invariants. Therefore the same argument as above shows that there exists a bounded map $b \colon \Gamma \to A$ such that $\gamma \mapsto f(\gamma) b(\gamma)$ has central defect.
\end{proof}

\section{Counterexample to central defect}
\label{s:cex}

\begin{prop}
\label{counterexample central defect}
    Let $\Gamma$ be an acylindrically hyperbolic group that surjects onto $\Z$. Let $G = \mathrm{SO}(2) \ltimes \RR^2$. Then there exists a quasihomomorphism $f \colon \Gamma \to G$ such that no bounded modification of $f$ has central defect.
\end{prop}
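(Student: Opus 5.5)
Our plan is to build $f$ from a homomorphism to $\mathrm{SO}(2)$ and an unbounded quasicocycle with values in $\RR^2$. Fix a surjection $\phi \colon \Gamma \to \Z$ and an irrational angle $\theta$, and let $\rho(\gamma) = R_{\theta\phi(\gamma)} \in \mathrm{SO}(2)$, so that $\rho(\Gamma)$ is dense in $\mathrm{SO}(2)$. View $\RR^2 = \mathbb{C}$ as a $\Gamma$-module via $\rho$, which is nontrivial and has no invariant vectors.

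Acylindrical hyperbolicity gives (Bestvina--Bromberg--Fujiwara, Hull--Osin, or Frigerio--Pozzetti--Sisto for unitary coefficients) that the kernel of the comparison map $H^2_b(\Gamma;\pi)\to H^2(\Gamma;\pi)$ is infinite-dimensional for such $\pi$. So there is a quasicocycle $c \colon \Gamma \to \RR^2$, meaning
\[
c(\gamma\delta) - c(\gamma)^{\rho(\delta)} - c(\delta)
\]
is bounded, which is not at bounded distance from any genuine cocycle. Set $f(\gamma) = (\rho(\gamma), c(\gamma)) \in G$. The defect of $f$ is exactly the coboundary of $c$, which lies in a bounded subset of $\RR^2$, so $f$ is a quasihomomorphism.

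Now suppose $f' \colon \Gamma' \to G'$ is a bounded modification with central defect. First we determine $G'$. The projection of $f'(\Gamma')$ to $\mathrm{SO}(2)$ is close to $\rho(\Gamma')$, which is still dense, and $f'$ is close to the unbounded map $f|_{\Gamma'}$ (an unbounded quasicocycle stays unbounded on finite-index subgroups). Algebraic subgroups of $G$ are, up to conjugation, $\mathrm{SO}(2)\ltimes V$ for a subspace $V$, or they lie in $\RR^2$, or they lie in a conjugate of $\mathrm{SO}(2)$. Only $G' = G$ projects densely onto $\mathrm{SO}(2)$ and is unbounded, so $G' = G$. The centre of $G$ is trivial, hence $f'$ is a homomorphism $\Gamma' \to G$ close to $f|_{\Gamma'}$.

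Writing $f'(\gamma) = (\rho'(\gamma), c'(\gamma))$, the map $\rho'$ is a homomorphism to the abelian group $\mathrm{SO}(2)$ and $c'$ is a genuine $\rho'$-cocycle. Closeness forces $\rho'(\gamma) = \rho(\gamma)$ up to bounded error, and this gives equality for all $\gamma$: the ratio $\rho'\rho^{-1}$ is a homomorphism from $\Gamma'$ to a compact group, so closeness alone is automatic and does not settle it. This is the main obstacle. We handle it by working in $G$: since conjugation by $c$ changes the translation by rotation, we compare translation parts. The vectors $c'(\gamma) - c(\gamma)$ are bounded, so $c$ is at bounded distance from a $\rho'$-cocycle. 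Then $c$ is a $\rho'$-quasicocycle as well as a $\rho$-quasicocycle, and
\[
c(\delta)^{\rho(\gamma)} - c(\delta)^{\rho'(\gamma)}
\]
is bounded for all $\gamma, \delta$. Because $c$ is unbounded, this forces $\rho = \rho'$ on $\Gamma'$.

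Hence $c|_{\Gamma'}$ is at bounded distance from a genuine $\rho$-cocycle, i.e. the class of $c|_{\Gamma'}$ in $H^2_b(\Gamma';\pi)$ is trivial. To reach a contradiction we need $c$ to remain nontrivial on restriction. We achieve this by choosing $c$ after $\Gamma'$ is fixed is not allowed, so instead we use that restriction to a finite-index subgroup is injective in bounded cohomology, via the transfer/averaging over cosets. This contradicts the choice of $c$. The points to check are the classification of $G'$ and the injectivity of restriction.
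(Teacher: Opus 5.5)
Your construction of $f=(\rho,c)$ and your reduction to $\Gamma'=\Gamma$ via injectivity of restriction match the paper. However, the argument that no bounded modification has central defect has two gaps.

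First, you rule out every $G'\neq G$ by saying that the $\mathrm{SO}(2)$-part of $f'$ is close to $\rho$, so $G'$ projects densely onto $\mathrm{SO}(2)$. As you note yourself a few lines later, closeness in the compact group $\mathrm{SO}(2)$ is vacuous, so this gives nothing. In particular, nothing excludes $G'$ being a subspace of $\RR^2$. That is exactly the case where central defect is automatic, because $G'$ is abelian. So it needs a real argument that $f|_{\Gamma'}$ is not close to any quasimorphism $\Gamma'\to\RR^2$. The paper supplies this in its abelian case: such a quasimorphism is bounded on commutators, which leads to $c(g)^{I-\rho(h)}\approx c(h)^{I-\rho(g)}$ and hence to boundedness of $c$. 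Your list of algebraic subgroups also omits those with finite nontrivial image in $\mathrm{SO}(2)$ and nonzero translation part, such as $\{\pm I\}\ltimes\RR^2$. These have trivial centre, so they are harmless once the next step is repaired.

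Second, the step ``the vectors $c'(\gamma)-c(\gamma)$ are bounded'' is circular. Closeness in the paper means $f'(\gamma)\in f(\gamma)K$ with $K$ bounded. Writing $f'(\gamma)=f(\gamma)(A(\gamma),a(\gamma))$ gives $c'(\gamma)=c(\gamma)^{A(\gamma)}+a(\gamma)$ with $A(\gamma)=\rho(\gamma)^{-1}\rho'(\gamma)$. So $c'-c$ is bounded only if you already know $\rho'=\rho$, which is what you are trying to prove.

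The missing idea is the paper's passage to left cocycles. Set $\tilde c(\gamma)=c(\gamma)^{\rho(\gamma)^{-1}}$ and $\tilde c'(\gamma)=c'(\gamma)^{\rho'(\gamma)^{-1}}$, so that $f(\gamma)=\tilde c(\gamma)\rho(\gamma)\in\tilde c(\gamma)\,\mathrm{SO}(2)$, and similarly for $f'$. Right-closeness now does give that $\tilde c'-\tilde c$ is bounded. Compare $\tilde c(\gamma\delta)\approx\tilde c(\gamma)+\tilde c(\delta)^{\rho(\gamma)^{-1}}$ with the analogous relation for $\tilde c'$. This yields $\tilde c(\delta)^{\rho(\gamma)^{-1}}\approx\tilde c(\delta)^{\rho'(\gamma)^{-1}}$ uniformly in $\delta$, hence $\rho'=\rho$ on $\Gamma'$, because $|\tilde c|=|c|$ is unbounded there.

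This comparison only uses that $f'$ is a quasihomomorphism whose rotation part is a homomorphism. So it also settles $G'\subseteq\RR^2$: there $\rho'\equiv I$, which would force $\rho|_{\Gamma'}$ to be trivial. It therefore repairs the first gap as well.

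A last, minor point concerns the existence of $c$. The result you need requires the maximal finite normal subgroup of $\Gamma$ to have nonzero invariant vectors, not that $\pi$ has none. That condition is automatic here, since $\pi$ factors through the torsion-free group $\Z$.
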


Acylindrically hyperbolic groups include non-abelian free groups, and more generally all non-elementary (relatively) hyperbolic groups \cite{osin}.

We use again the exponential notation for conjugacy; note that for $v \in \RR^2$ and $A \in \mathrm{SO}(2)$, the conjugacy in $G$ written as $v^A$ is just the ordinary right action of $A$ on $v$.

\begin{proof}
    The map to $\Z$ induces a homomorphism $u \colon \Gamma \to \mathrm{SO}(2)$ with dense image. Recall that a \emph{$u$-quasicocycle} is a map $c \colon \Gamma \to \RR^2$ such that
    \[D_c \coloneqq \{ c(gh) - (c(g)^{u(h)} + c(h)) : g, h \in \Gamma\}\]
    is bounded. If $D_c$ is trivial, we call $c$ a \emph{$u$-cocycle}. We can choose a $u$-quasicocycle $c$ that is not close to any $u$-cocycle. By \cite[Theorem 1.1]{quasicocycles}, this is possible provided that there exists an invariant vector for the maximal finite normal subgroup of $\Gamma$; in our case, we chose an action that factors through $\Z$, hence every finite subgroup of $\Gamma$ in fact acts trivially.

    Define $f = (u, c) \colon \Gamma \to \mathrm{SO}(2) \ltimes \mathbb{R}^2$. This is a quasihomomorphism:
    \begin{align*}
        f(gh) &= (u(gh), c(gh)) \approx_{D_c} (u(g)u(h), c(g)^{u(h)} + c(h)) \\
        &=(u(g), c(g))(u(h), c(h)) = f(g)f(h).
    \end{align*}
    Suppose by contradiction that there exists a bounded modification $f' \colon \Gamma' \to G'$ such that $G'$ centralises $D_{f'}$. Note that $u|_{\Gamma'} \colon \Gamma' \to \mathrm{SO}(2)$ is still a homomorphism with dense image, and $c|_{\Gamma'}$ is a $u|_{\Gamma'}$ quasicocycle. Moreover, it follows from the interpretation of quasicocycles in terms of bounded cohomology, and the injectivity of the restriction to $\Gamma'$ \cite[Proposition 8.6.2]{monod:book}, that $c|_{\Gamma'}$ is not close to any $u|_{\Gamma'}$-cocycle. Therefore, we may assume that $\Gamma' = \Gamma$, and for clarity we will denote $f' = \varphi$, and $G' = H$. By assumption, there exists a bounded set $D_{\varphi, f}$ such that $\varphi (g) \approx_{D_{\varphi, f}} f(g)$ (see \cref{notation approx}).

    \emph{Suppose first that $H$ abelian.} Then $\varphi$ is a quasimorphism with values in an abelian group, and in particular $\varphi$ is uniformly bounded on commutators, more precisely $\varphi([g, h])$ is bounded by twice the defect $D_{\varphi}$ \cite[Section 2.2.3]{calegari}.
    \begin{align*}
        0 &\approx_{2D_\varphi} \varphi([g, h]) \approx_{D_{\varphi, f}} f([g, h]) = c([g, h]) = c(g^{-1}h^{-1}gh) \\
        &\approx_{D_c} c(g^{-1})^{u(h^{-1}gh)} + c(h^{-1})^{u(gh)} + c(g)^{u(h)} + c(h).
    \end{align*}
    Now
    \begin{align*}
        0 &\approx_{D_f} f(gg^{-1}) = c(g g^{-1}) \approx_{D_c} c(g)^{u(g^{-1})} + c(g^{-1}) \\
        \Rightarrow c(g^{-1}) &\approx_{D_c \cdot D_f} -c(g)^{u(g^{-1})}.
    \end{align*}
    Combining the above, together with the fact that $u$ is a homomorphism to an abelian group, we get a bounded set $K$ such that
    \begin{align*}
        0 &\approx_K -c(g) - c(h)^{u(g)} + c(g)^{u(h)} + c(h) \\
        \Rightarrow c(g)^{I - u(h)} &\approx_K c(h)^{I - u(g)}.
    \end{align*}
    Fixing $g$ and letting $h$ vary, the left hand side remains bounded in terms of $g$, which implies that $c(\Gamma)$ is uniformly close to the fixed point set of $u(g)$, which is $\{0\}$ as soon as $u(g)$ is non-trivial. Therefore $c(\Gamma)$ is bounded, contradiction.

    \emph{Now suppose that $H$ is not abelian.} Then $H$ cannot be contained in $\RR^2$, so there exists $(x, a) \in H$ with $x \neq I$. The centraliser of $(x, a)$ intersects $\RR^2$ trivially, which implies that $D_\varphi$ intersects $\RR^2$ trivially. Similarly, $H$ cannot project injectively to $\mathrm{SO}(2)$, so there exists $(I, b) \in H$ with $b \neq 0$. The centraliser of $(I, b)$ is contained in $\RR^2$, and therefore $D_\varphi \subset \RR^2$. We conclude that $D_\varphi$ is trivial, i.e. that $\varphi$ is a homomorphism. So we can write $\varphi(g) = (v(g), d(g))$, where $v \colon \Gamma \to \mathrm{SO}(2)$ is a homomorphism, and $d \colon \Gamma \to \RR^2$ is a $v$-cocycle, i.e. $d(gh) = d(g)^{v(h)} + d(h)$. If $v = u$, then $d$ will be a $u$-cocycle, and $\varphi(g) \approx_{D_{\varphi, f}} f(g) \Rightarrow d(g) \approx_{D_{\varphi, f}} c(g)$, which contradicts our choice of $c$. Therefore, it remains to show that $v = u$.

    Write $\tilde{d}(g) = d(g)^{v(g)^{-1}}$. Then
    \[\tilde{d}(gh) = d(gh)^{v(gh)^{-1}} = (d(g)^{v(h)} + d(h))^{v(gh)^{-1}} = \tilde{d}(g) + \tilde{d}(h)^{v(g)^{-1}}.\]
    In words, $\tilde{d}$ is a \emph{left} $v$-cocycle, where we consider the left action of $\mathrm{SO}(2)$ on $\RR^2$ given by right multiplication with the inverse. Moreover,
    \[\varphi(g) = (v(g),d(g)) = \tilde{d}(g) v(g) \approx_{\mathrm{SO}(2)} \tilde{d}(g).\]
    Similarly, $\tilde{c}(g) = c(g)^{u(g)^{-1}}$ is a left $u$-quasicocycle, and $f(g) \approx_{\mathrm{SO}(2)} \tilde{c}(g)$. Therefore $\tilde{c}(g) \approx_{\mathrm{SO}(2)} \tilde{d}(g)$. We now estimate:
    \begin{align*}
        \tilde{c}(g) + \tilde{c}(h)^{u(g)^{-1}} &\approx_{D_{\tilde{c}}} \tilde{c}(gh) \approx_{\mathrm{SO}(2)} \tilde{d}(gh) \\
        &= \tilde{d}(g) + \tilde{d}(h)^{v(g)^{-1}} \approx_{\mathrm{SO}(2)} \tilde{c}(g) + \tilde{c}(h)^{v(g)^{-1}}.
    \end{align*}
    We have shown that there exists a bounded set $K$ such that
    \[\tilde{c}(h)^{u(g)^{-1}} \approx_K \tilde{c}(h)^{v(g)^{-1}}.\]
    Note that $\tilde{c}(\Gamma)$ is unbounded, because $f(\Gamma)$ is unbounded. Fixing $g$ and letting $h$ vary, we conclude that $v = u$, which finishes the proof.
\end{proof}

Of course, \cref{counterexample central defect} does not contradict \cref{main theorem}, because the defect of $f$ is contained in $\mathbb{R}^2$, and so its $\mathrm{SO}(2)$-closure is bounded and normal.

\bibliographystyle{halpha}
\bibliography{refs}

\end{document}